\newtheorem{theorem}{Theorem}[section]
\newtheorem{proposition}[theorem]{Proposition}
\newtheorem{definition}[theorem]{Definition}
\newtheorem{lemma}[theorem]{Lemma}
\newtheorem{remark}[theorem]{Remark}
\newcommand{\A}{\mathcal{A}}
\newcommand{\spam}{\mathop{\mathrm{span}}}
\newcommand{\nats}{\mathbb{N}}
\newcommand{\reals}{\mathbb{R}}
\newcommand{\RR}{\mathbb{R}}
\newcommand{\comps}{\mathbb{C}}
\newcommand{\M}{\mathbb{M}}
\newcommand{\calo}{\mathcal{O}}
\newcommand{\sphere}{\mathbb{S}^2}
\newcommand{\sph}{\mathbb{S}} %Need section symbol. Replaced \S with \sph.
\newcommand{\dif}{\mathrm{d}}
\newcommand{\vect}[1]{\mathbf{#1}}
\renewcommand{\b}{B}
\newcommand{\bfA}{\mathbf{A}}
\newcommand{\bff}{\mathbf{f}}
\newcommand{\bfy}{\mathbf{y}}
\renewcommand{\d}{\mathrm{dist}}
\newcommand{\comp}{\b^{c}}
\renewcommand{\A}{\mathsf{A}}%annulus on tangent plane
\newcommand{\C}{\mathsf{C}}%Matrix with polynomial coefficients
\newcommand{\J}{\mathcal{J}}
\renewcommand{\L}{\mathcal{L}}
\newcommand{\ns}[1]{\left|\!\left|\!\left| {#1}\right|\!\right|\!\right|}
\newcommand{\scrc}{{\mathscr C}}
\def\calc{{\mathcal C}}
\def\K{\mathrm{K}}
\def\coll{\K_{\Xi}}
\def\bfa{{\bf a}}
\def\bfc{{\bf c}}
\def\bfe{{\bf e}}
\def\bfy{{\bf y}}
\def\bfzero{{\bf 0}}
\def\achi{\check \chi}
\title{Localized bases for kernel spaces on the unit sphere 
      \thanks{ \emph{2000 Mathematics Subject Classification:}41A05, 41A30, 41A63, 65D05 } 
      \thanks{\emph{Key words:}interpolation, thin-plate splines, sphere, kernel approximation}
     }
\author{E.~Fuselier\thanks{ Department of Mathematics, High Point
    University, High Point, NC 27262, USA.} \and T.~Hangelbroek\thanks{
    Department of Mathematics, University of Hawaii, Honolulu, HI
    96822, USA.  Research supported by grant DMS-1047694 from the
    National Science Foundation.}\and F. J.~Narcowich\thanks{ Department
    of Mathematics, Texas A\&M University, College Station, TX 77843,
    USA. Research supported by grant DMS-0807033 from the National
    Science Foundation.}\and J. D.~Ward\thanks{ Department of
    Mathematics, Texas A\&M University, College Station, TX 77843,
    USA. Research supported by grant DMS-0807033 from the National
    Science Foundation.}\and G. B~Wright\thanks{ Department of
    Mathematics, Boise State University, Boise, ID 83725, USA.
    Research supported by grants DMS-0934581and DMS-0540779 from the
    National Science Foundation.}  }
\begin{document}
\maketitle
\begin{abstract}
  Approximation/interpolation from spaces of positive definite or
  conditionally positive definite kernels is an increasingly popular
  tool for the analysis and synthesis of scattered data, and is
  central to many meshless methods. For a set of $N$ scattered sites,
  the standard basis for such a space utilizes $N$ \emph{globally}
  supported kernels; computing with it is prohibitively expensive for
  large $N$. Easily computable, well-localized bases, with
  ``small-footprint" basis elements -- i.e., elements using only a
  small number of kernels -- have been unavailable. Working on
  $\sphere$, with focus on the restricted surface spline kernels
  (e.g. the thin-plate splines restricted to the sphere), we construct
  easily computable, spatially well-localized, small-footprint, robust
  bases for the associated kernel spaces. Our theory predicts that
  each element of the local basis is constructed by using a
  combination of only $\mathcal{O}((\log N)^2)$ kernels, which makes
  the construction computationally cheap.  We prove that the new basis
  is $L_p$ stable and satisfies polynomial decay estimates that are
  stationary with respect to the density of the data sites, and we
  present a quasi-interpolation scheme that provides optimal $L_p$
  approximation orders. Although our focus is on $\sph^2$, much of the
  theory applies to other manifolds -- $\sph^d$, the rotation group,
  and so on. Finally, we construct algorithms to implement these
  schemes and use them to conduct numerical experiments, which
  validate our theory for interpolation problems on $\sphere$
  involving over one hundred fifty thousand data sites.
\end{abstract}

\pagestyle{myheadings}
\thispagestyle{plain}
\markboth{E. Fuselier, T. Hangelbroek, F. J. Narcowich, J. D. Ward,
  and G. B. Wright}{Localized Bases for Kernel Spaces}
  \newpage
\section{Introduction}\label{sec0}

Approximation/interpolation with positive definite or conditionally
positive definite kernels is an increasingly popular tool for
analyzing and synthesizing of scattered data and is central to many
meshless methods. The main difficulty in using this tool is that
well-localized bases with ``small-footprint" elements -- i.e.,
elements using only a small number of kernels -- have been
unavailable.  With this in mind, we have two main goals for this
paper.

The first is the theoretical development of small-footprint bases that
are well-localized spatially, for a variety of kernels. For important
classes of kernels on $\sph^2$, the theory itself predicts that a
basis element requires only $\calo(\log(N)^2)$ kernels, where $N$ is
the number of data sites.

Previous numerical experiments on data sets, with $N$ on the order of
a thousand, used ad-hoc techniques to determine the number of kernels
per basis element. The predictions of our theory, on the other hand,
have been verified numerically on $\sph^2$ for data sets with over a
hundred thousand sites.

Our second goal is to show how to easily and efficiently compute these
small-footprint, well-localized, robust bases for spaces associated
with restricted surface-spline kernels on the sphere $\sphere$. The
kernels in question are spherical basis functions having the form
\begin{equation}
  k_{m}(x,\alpha) := (-1)^m (1- x\cdot \alpha)^{m-1}
\log(1-x\cdot\alpha), 
\label{suface_spline}
\end{equation}
for $m = 2, 3, \dots$ (cf. \cite[Eqn.~3.3]{MNPW}). The
kernel spaces are denoted $S_m(\Xi)$ -- these are finite dimensional
spaces of functions obtained as linear combinations of $k_m$, sampled
at some (finite) set of nodes $\Xi\subset\sphere$, plus a spherical
polynomial $p$ of degree $m-1$, i.e. $\sum_{\xi\in\Xi} a_{\xi}
k_m(\cdot,\xi)+p(\cdot)$.  The coefficients involved satisfy the
simple side conditions given in \ref{SS_space}.

The Lagrange functions $\chi_\xi$, which interpolate cardinal
sequences: $\chi_{\xi}(\zeta)= \delta_{\xi,\zeta},\ \zeta\in\Xi$, form
a basis for $S_m(\Xi)$. Recently, it has been shown in \cite{HNW2},
for restricted surface splines, as well as many other kernels, that
these functions decay extremely rapidly away from $\xi$.  Thus,
$\{\chi_{\xi}\}_{\xi\in\Xi}$ forms a basis that is theoretically quite
good (sufficient to demonstrate that the Lebesgue constant is
uniformly bounded, among many other things). However, determining a
Lagrange basis function generally requires solving a full linear
system with at least $N:= \#\Xi$ unknowns, so working with this basis
directly is computationally expensive. In this paper we consider an
alternative basis: one that shares many of the nice properties of the
Lagrange basis, yet its construction is computationally cheap.

Here is what we would desire in an easily computed, robust basis
$\{b_\xi\}_{\xi\in \Xi}$ for $S_m(\Xi)$. Each basis function should be
highly localized with respect to the mesh norm $h:= \max_{\xi \in \Xi}
\text{dist}(x,\xi)$ of $\Xi$. Moreover, each should have a nearly
stationary construction. By this we mean that each basis element
$b_\xi$ is of the form $\sum_{\eta\in\Upsilon(\xi)}
A_{\xi,\eta}k_m(\cdot,\eta)+p_\xi$, where the coefficients
$A_{\xi,\eta}$ and the degree $m-1$ polynomial $p_\xi$ are completely
determined by $k_m$ and a small subset of centers Specifically, we
wish $b_\xi$ to satisfy the following requirements:
\begin{align*}
  \text{\phantom{i}i)}~~ &\#\Upsilon(\xi) = \mathrm{c}(N)\\
  \text{ii)}~~ &|b_\xi(x)| \le
  \sigma(r/h),\
  r:=\text{dist}(x,\xi).
\end{align*}
where the number of points influencing each basis function
$\mathrm{c}(N)$ is constant or slowly growing with $N$, and the
function $\sigma(\cdot)$ decays rapidly -- at an exponential rate
$\sigma(t) \le Ce^{-\nu|t|}$ or at least at a fast polynomial rate
$\sigma(t) \le C(1+|t|)^{-J}$. The B-spline basis, constructed from
the family of truncated power functions (i.e., using $(x-y)_+^m$ in
place of $k_m(x,y)$), is a model solution to the problem we consider.
 
{\bf Main results.}  The solution we present is to consider a basis of
``local Lagrange'' functions, which are constructed below in
Section~\ref{LLB}.  It has the following properties:

\begin{itemize}
\item \emph{Numerical Stability}. For any $J>2$, one can construct a
  numerically stable basis with decay $\sigma(t) \le C(1+|t|)^{-J}$.

\item \emph{Small footprint}. Each basis function is determined by a
relatively small set of centers: $\mathrm{c}(N)\le M\bigl(\log
N\bigr)^2$, where the constant $M$ is proportional to the square of
the rate of decay $J$: $M\propto J^2$.

\item \emph{$L_p$ stability}. The basis is stable in $L_p$: sequence norms
$\|c\|_{\ell_p}$ of the coefficients are comparable to $L_p$ norms of
the expansion $\sum_{\xi\in\Xi} c_{\xi} b_{\xi}$.

\item \emph{Near-best $L_\infty$ approximation}. For sufficiently large
  $J$, the operator $Q_{\Xi}f = \sum_{\xi\in\Xi} f(\xi) b_{\xi}$
  provides near-best $L_{\infty}$ approximation.
\end{itemize}

{\bf Preconditioners.} Over the years practical implementation of
kernel approximation has progressed despite the ill-conditioning of
kernel bases. This has happened with the help of clever numerical
techniques like multipole methods and other fast methods of evaluation
%\cite{Greengard, BPT, BL, Johnson}
\cite{Greengard, BPT, Johnson}  
and often with the help of preconditioners 
%\cite{BP,BGP, FGP,Kansa,Sloan}.
\cite{BP, FGP,Kansa,Sloan}.  Many results already exist in the RBF
literature concerning preconditioners and ``better'' bases. For a good
list of references and further discussion, see
\cite{Fasshauer}. Several of these papers use local Lagrange functions
in their efforts to efficiently construct interpolants, but the number
of points chosen to localize the Lagrange functions are ad hoc and
seem to be based on experimental evidence. For example, Faul and
Powell, in \cite{FaulPowell}, devise an algorithm which converges to a
given RBF interpolant that is based on local Lagrange interpolants
using about thirty nearby centers. Beatson--Cherrie--Mouat, in
\cite{BCM}, use fifty local centers (p.~260, Table~1) in their
construction along with a few ``far away'' points to control the
growth of the local interpolant at a distance from the center. In
other work, Ling and Kansa \cite{Kansa} and co-workers have studied
approximate cardinal basis functions based on solving least squares
problems.

An offshoot of our results 
% is that one can now state, a priori, the number of coefficients
% needed to guarantee good preconditioners.
is a strategy for selecting centers for preconditioning (as in
\cite{FaulPowell} and \cite{BCM}) that scales correctly with the total
number of centers $N$. We demonstrate the power of this approach in
Section \ref{app}, where the local basis is used to successfully
precondition kernel interpolation problems varying in size by several
orders of magnitude.
%interpolating \textbf{large data sets} in Section \ref{app}.

{\bf Organization.}  We now sketch the outline of the remainder of the
article. In Section~\ref{Background} we give some necessary
background: in Section~\ref{Background_sphere} we treat analysis on
spheres and in Section~\ref{CPD_intro} we treat conditionally positive
definite kernels. Section~\ref{LLB} presents the construction of the
local Lagrange basis. Much of the remainder of the article is devoted
to proving that this basis has the desired properties mentioned
above. However, doing this will first require a thorough understanding
of the (full) Lagrange basis $\{\chi_\xi\}_{\xi\in\Xi}$, which we
study in detail in Sections \ref{S:LagrangeBasis} and
\ref{S:Coefficients}.

In Section \ref{S:LagrangeBasis} we consider the full Lagrange basis:
the stable, local bases constructed in \cite{HNW2}. We first
numerically exhibit the exponential decay of these functions away from
their associated center. A subsequent experiment shows that the
coefficients in the expansion $\chi_{\xi} = \sum_{\zeta\in\Xi}
A_{\xi,\zeta}k_m(\cdot,\zeta)$ have similar rapid decay. These
numerical observations confirm the theory in
Section~\ref{S:Coefficients}, where it is proven that the Lagrange
coefficients indeed decay quickly and stationarily with respect to $h$
as $\zeta$ moves away from $\xi$.

Section~\ref{S:better_basis} treats the main arguments of the
paper. This occurs roughly in three stages.
\begin{enumerate}
\item The decay of the Lagrange function coefficients indicates that
  {\em truncated} Lagrange functions $\widetilde \chi_{\xi} =
  \sum_{\zeta\in\Upsilon(\xi)} A_{\xi,\zeta}k_m(\cdot,\zeta)$ will be
  satisfactory.  But simply truncating causes the function to fall
  outside of the space (moment conditions for the coefficients are no
  longer satisfied), so it is necessary to adjust the coefficients
  slightly. The cost of this readjustment is related to the smallest
  eigenvalue of a certain Gram matrix: a symmetric positive definite
  matrix that depends on the set $\Upsilon(\xi)$ -- this is discussed
  in Section \ref{realigning}.
\item Section \ref{spheres_proj_lagrange} estimates the minimal
  eigenvalue of the Gram matrix, which is shown to be quite small
  compared to tail of the coefficients.  Although the resulting
  truncated, adjusted Lagrange function decays at a fast polynomial
  rate and requires $\mathcal{O}((\log N)^2)$ terms, it is still
  unsuitable because its construction requires the full expansion.
\item The local basis of Section \ref{LLB} has coefficients
  sufficiently close to those of $\widetilde \chi_{\xi}$ to guarantee
  that it too satisfies the above properties. This, the main theorem
  and its corollaries are given in Section~\ref{loc_lag_bases}.
\end{enumerate}
In Section~\ref{app} we demonstrate the effectiveness of using the
local basis to build preconditioners for large kernel interpolation
problems.
 
{\bf Generalization to other manifolds/kernels.} Finally, we note that
many of the results here can be demonstrated in far greater generality
with minimal effort: in particular, most results hold for Sobolev
kernels on manifolds (as considered in \cite{HNW}) and for many
kernels of polyharmonic and related type on two point homogeneous
spaces (as considered in \cite{HNW2}). To simplify our exposition, we
focus almost entirely on surface splines on $\sphere$. We will
include remarks discussing the generalizations as we go along.

\section{Background}\label{Background}
\subsection{The sphere}\label{Background_sphere}
We denote by $\sphere$ the unit sphere in $\reals^{3}$, and by $\mu$
we denote Lebesgue measure. The distance between two points, $x$ and
$\xi$, on the sphere is written $\d(x,\xi) := \arccos (x\cdot
\xi)$. The basic neighborhood is the spherical `cap' $B(\alpha,r):=
\{x\in\sphere: \d(x,\alpha)<r\}$. The volume of a spherical cap is
$\mu(B(\alpha,r)) = 2\pi(1-\cos r)$.

Throughout this article, $\Xi$ is assumed to be a finite set of
distinct nodes on $\sphere$ and we denote the number of elements in
$\Xi$ by $\#\Xi$.  The \emph{mesh norm} or f\emph{ill distance}, $h :=
h(\Xi,\sphere):= \max_{x \in \sphere}\d(x,\Xi),$ measures the density
of $\Xi$ in $\sphere$. The \emph{separation radius} is $q:= \frac12
\min_{\xi\ne \zeta} \text{dist}(\zeta,\xi)$, where $\xi,\zeta\in \Xi$,
and the \emph{mesh ratio} is $\rho_\Xi :=h/q$.

{\bf The Laplace-Beltrami operator and spherical coordinates}. Given a
north pole on $\sphere$, we will use the longitude $\theta_1\in
[0,2\pi)$ and the colatitude $\theta_2\in [0,\pi]$ as coordinates. The
Laplace--Beltrami operator is then given by
$$
\Delta = \frac{1}{\sin(\theta_2)}\frac{\partial}{\partial
  \theta_2}\sin(\theta_2) \frac{\partial}{\partial \theta_2}
+\frac{1}{\sin^2(\theta_2)}\frac{\partial^2}{\partial \theta_1^2}.
$$
For each $\ell\in \nats$, the eigenvalues of the negative of the
Laplace-Beltrami operator, $-\Delta$, have the form
$\nu_\ell:=\ell(1+\ell)$; these have multiplicity $2\ell+1$. For each
fixed $\ell$, the eigenspace $\mathcal{H}_{\ell}$ has an orthonormal
basis of $2\ell+1$ eigenfunctions, $\{Y_{\ell}^\mu\}_{\mu=-\ell}^{\ell}$,
the {\em spherical harmonics} of degree $\ell$. The space of spherical
harmonics of degree $\ell\le\sigma$ is $\Pi_\sigma = \bigoplus_{\ell
  \le \sigma} \mathcal{H}_\ell$ and has dimension
$(\sigma+1)^2$. These and are the basic objects of Fourier analysis on
the sphere.  In order to simplify notation, we often denote a generic
basis for $\Pi_\sigma$ as $(\phi_j)_{j=1\dots (\sigma+1)^2}$. We
deviate from this only when a specific basis of spherical harmonics is
required.

{\bf Covariant derivatives}. The second kind of operators are the
covariant derivative operators. These play a secondary role in this
article -- they are a construction useful for defining smoothness
spaces and for proving results about surface splines, but they play no
role in the actual implementation of the algorithms. We present some
useful overview here -- a more detailed discussion, where the relevant
concepts are developed for Riemannian manifolds (including $\sphere$)
is given in \cite[Section 2]{HNW}.  A more complete discussion is not
warranted here.
  
We consider tensor valued operators $\nabla^m $, where each entry
$(\nabla^m)_{(i_1,\dots,i_k)}$ is a differential operator of order $m$
-- each index $i_j = 1$ or $2$, corresponding to the variables $x_1=
\theta_1$ and $x_2= \theta_2$. They transform tensorially and are the
``invariant'' partial derivatives of order $m$. For $m=1$, $\nabla$
is the ordinary gradient, but when $m=2$, $\nabla^2$ is the invariant
Hessian. In spherical coordinates, $\nabla^2 u$ is a rank two
covariant tensor with four components:
\begin{gather*}
  (\nabla^2 f)_{1,1} = \csc^2(\theta_2) \frac{\partial^2f}{\partial
    \theta_1^2} +\cot(\theta_2) \frac{\partial f}{\partial \theta_2}, \quad
  (\nabla^2 f)_{2,2} = \frac{\partial^2f}{\partial \theta_2^2}
  \\
  (\nabla^2 f)_{1,2} = (\nabla^2 f)_{2,1} = \csc(\theta_2)\bigg(
  \frac{\partial^2 f}{\partial \theta_1\partial \theta_2} - \cot(\theta_2)
  \frac{\partial f}{\partial \theta_1}\bigg).
\end{gather*} 
For a discussion of covariant derivatives on a general compact,
$C^\infty$ manifold, we refer the reader to \cite[(2.7)]{HNW}.

Of special importance is the fact that at each point $x\in\sphere$
there is a natural inner product $\langle \cdot,\cdot \rangle$ on the
space of tensors. The inner product employs the inverse of the metric
tensor. For the sphere this is a diagonal $2\times2$ matrix with
entries:  and $g^{1,1}(x) = (\sin^2(\theta_2))^{-1}$ and $g^{2,2}(x) =1 $.
The general form of the inner product for tensors is
\[
\langle \nabla^m f, \nabla^m g\rangle_x = \sum_{i_1,\dots,i_m}
(\nabla^mf (x))_{i_1,\dots,i_m} (\nabla^m g(x))_{i_1,\dots,i_m}
g^{i_1,i_1}(x)\dots g^{i_m,i_m}(x).
\]
This gives rise to a notion of the pointwise size of the $m$th
derivative at $x$:
\[
|\nabla^m f(x)| := \sqrt{\langle \nabla^m f, \nabla^m f\rangle_x}.
\]

{\bf Smoothness spaces}. This allows us to construct Sobolev spaces.
For each $m$ and each measurable subset $\Omega\subset \sphere$, the
$L_2$ Sobolev norm is
$$\|f\|_{W_2^m(\Omega)} := \left(\sum_{k\le m}
  \int_{\Omega} |\nabla^k f(x)|^2\dif \mu(x) \right)^{1/2}.
$$

\subsection{Conditionally positive definite kernels and interpolation}
\label{CPD_intro}
Many of the useful computational properties of restricted surface
splines stem from the fact that they are conditionally positive
definite. %with respect to a space of spherical harmonics.
We treat the topic of conditional positive definiteness for a general
kernel $k$.
\begin{definition}\label{cpd}
   A kernel $k$ is conditionally positive definite with respect to a
   finite dimensional space $\varPi$ if, for any set of $N$ distinct
   centers $\Xi$, the matrix $\coll := \bigl(k(\xi,\zeta) \bigr)_{\xi,
     \zeta\in\Xi}$ is positive definite on the subspace of all vectors
   $\bfa \in \comps^{N}$ satisfying $\sum_{\xi\in\Xi} a_{\xi} p(\xi) =
   0 $ for $p\in \varPi$.
\end{definition}

Let $(\varphi_j)_{j\in\nats}$ be a complete orthonormal basis of
continuous functions.
%(i.e., $\|\phi_j\|_{\infty}= 1$) 
Consider a kernel
\begin{equation}
\label{kernel_expansion}
k(x,y):=\sum_{j\in \nats} \tilde{k}(j) \varphi_j(x)\overline{\varphi_j(y)}
\end{equation}
with coefficients $\tilde{k}\in \ell_2(\nats)$ so that all but
finitely many coefficients $\tilde{k}(j)$ are positive.  Then $k$ is
conditionally positive definite with respect to the (finite
dimensional) space $\varPi := \spam(\varphi_j \mid j\in \J),$ where
$\J = \{j\mid \tilde{k}(j) \le 0\}.$ Indeed,
$$
  \sum_{\xi\in\Xi} \sum_{\zeta\in\Xi}  a_{\xi}
  k(\xi,\zeta)\overline{
    a_{\zeta}}  =
  \sum_{j\in\nats}\tilde{k}(j)\sum_{\xi,\zeta\in\Xi} \alpha_{\xi} 
  \varphi_j(\xi)\overline{a_{\zeta}\varphi_j(\zeta) } =
  \sum_{j\notin \J} \tilde{k}(j) \|\alpha\varphi_j\|_{\ell_2 (\Xi)}^2 >0
$$
provided $\sum_{\xi} a_{\xi} \varphi_j(\xi) = 0$ for $j\in \J$ (i.e.,
satisfying $\tilde{k}(j)\le 0$).

Conditionally positive definite kernels are important for the
following interpolation problem.  Suppose $\Xi\subset \sphere$ is a
set of nodes on the sphere, $f:\sphere\rightarrow\mathbb{R}$ is some
target function, and $f\bigr|_{\Xi}$ are the samples of $f$ at the
nodes in $\Xi$.  We look for a function that interpolates this data
from the space
$$
S(k,\Xi):= S(k,\Xi,\varPi) : = %\left\{ 
	\left\{\sum_{\xi\in\Xi}a_{\xi} k(\cdot,\xi) \mid
	%+p_f \;\middle\vert\; p_f \in\varPi, \quad 
	\sum_{\xi\in\Xi}a_{\xi}p(\xi) =0, \;
	\forall p\in\varPi\right\}%	\right\}.
	+\varPi,
$$
Provided $\Xi\subset \sphere$ is unisolvent with respect to $\varPi$
(meaning that $p(\xi) = 0$ for all $\xi \in\Xi$ implies that $p=0$ for
any $p\in \varPi$), the unique interpolant from $S(k,\Xi)$ can be
written
$$
s(\cdot) = \sum_{\xi\in \Xi} a_{\xi} k(\cdot,\xi) + \sum_{j\in\J} c_j
\varphi_j(\cdot),
$$
where the expansion coefficients satisfy the (non-singular) linear
system of equations:
\begin{equation}\label{Collocation_System}
\begin{pmatrix}
\coll & \Phi\\
\Phi^T &0
\end{pmatrix}
\begin{pmatrix}
\bfa\\\bfc\end{pmatrix}
=
\begin{pmatrix}
\bff\\\bfzero\end{pmatrix},
\end{equation}
where $\coll = (k(\xi_i,\xi_j))$, $i,j=1,\ldots,N$, and $\Phi =
(\varphi_j(\xi_i))$, $i=1,\ldots,N$, $j\in \J$.  This interpolant
plays a dual role as the minimizer of the semi-norm $|\cdot|_{k}$
induced from the ``native space'' semi-inner product
\begin{equation}\label{NS_norm}
\left \langle u,v\right\rangle_{k} =
\left \langle  \sum_{j\in\nats}\hat{u}(j) \varphi_j, 
 \sum_{j\in\nats}\hat{v}(j) \varphi_j 
\right \rangle_{k}
:= \sum_{j\notin \J} \frac{\hat{u}(j) \overline{\hat{v}(j)} }{\tilde{k}(j)}.
\end{equation}
Namely, it is the interpolant to $\bff$ having minimal semi-norm
$|u|_k = \sqrt{\langle u,u\rangle_k}$.
 
\section{Constructing the local Lagrange basis}\label{LLB}
The restricted surface splines $k_m$ (see \eqref{suface_spline}) are
conditionally positive definite with respect to the space of spherical
harmonics of degree up to $m-1$, i.e. $\Pi_{m-1}$. The finite
dimensional spaces associated with these kernels are denoted as in the
previous section:
 %$S_m(\Xi):=S(k_m,\Xi,\Pi_{m-1})$. In other words,
%follows. Let $\Xi \subset \sphere$ be a finite set of points, called
%centers, having cardinality $N=\# \Xi$. 
%%The centers are \emph{scattered} in the sense that they do not need to belong to a regular grid. 
%%%In the positive definite case, the space $S(\Xi)$
%The spaces associated with the kernel $k_m$ and the set $\Xi$ is
%%$S(k,\Xi)=\spn\{k(\cdot,\Xi), \ \xi\in \Xi\}$; that is,
\begin{eqnarray}\label{SS_space}
 S_m(\Xi) &:=&S(k_m,\Xi,\Pi_{m-1})\nonumber \\
 &= &\left\{\textstyle{\sum_{\xi\in \Xi}}a_\xi k_m(\cdot,\xi)\mid \
   \sum_{\xi\in\Xi} a_\xi \phi(\xi)  = 0,\, \forall \phi \in \Pi_{m-1} \right\} + \Pi_{m-1}.
\end{eqnarray}

\noindent The goal of this section is to provide an easily
constructed, robust basis for $S_m(\Xi)$. The fundamental idea behind
building this basis is to associate with each $\xi\in\Xi$, a new basis
function that interpolates over a relatively small set of nodes a
function that is cardinal at $\xi$.

Specifically, let $\Upsilon(\xi)$ be the $n \ll N$ nearest neighbors to the node $\xi$, including the node $\xi$; see Figure \ref{fig:local_lagrange_illustration} for an illustration.   Then the new basis function associated with $\xi$ is given by
\begin{align}
\achi_{\xi}(\cdot) = \sum_{\zeta \in \Upsilon(\xi)} A_{\xi,\zeta} k_m(\cdot,\zeta) + \sum_{j=1}^{m^2} c_{\xi,j} \phi_j(\cdot), \label{eq:approx_lagrange}
\end{align}
where $\phi_j$ are a basis for the spherical harmonics of degree $\leq m-1$.  The coefficients $A_{\xi,\zeta}$ and $c_{\xi,j}$ are determined from the cardinal conditions
\begin{align}
\achi_{\xi}(\zeta) = 
\begin{cases}
1 & \text{if $\zeta=\xi$}, \\
0 & \text{if $\zeta \in \Upsilon(\xi)\setminus\xi$},
\end{cases}
\quad\text{and}\quad
\sum_{\zeta\in\Upsilon(\xi)} A_{\xi,\zeta} \phi_j(\zeta)=0.
\label{LL_conditions}
\end{align}
These coefficients can be determined by solving the (small) linear system
 \begin{equation}
\begin{pmatrix}
\K_{\Upsilon(\xi)} & \Phi\\
\Phi^T &0
\end{pmatrix}
\begin{pmatrix}
\bfA_{\xi}\\\bfc_{\xi}\end{pmatrix}
=
\begin{pmatrix}
\mathbf{y}_{\xi}\\\bfzero\end{pmatrix},
\label{LL_system}
\end{equation}
where $\mathbf{y}_{\xi}$ represents the cardinal data and the entries
of the matrix follow from \eqref{Collocation_System}.  We call
$\achi_{\xi}$ a \emph{local Lagrange function} about $\xi$.

\begin{figure}[thb]
\centering
\includegraphics[width=0.55\textwidth]{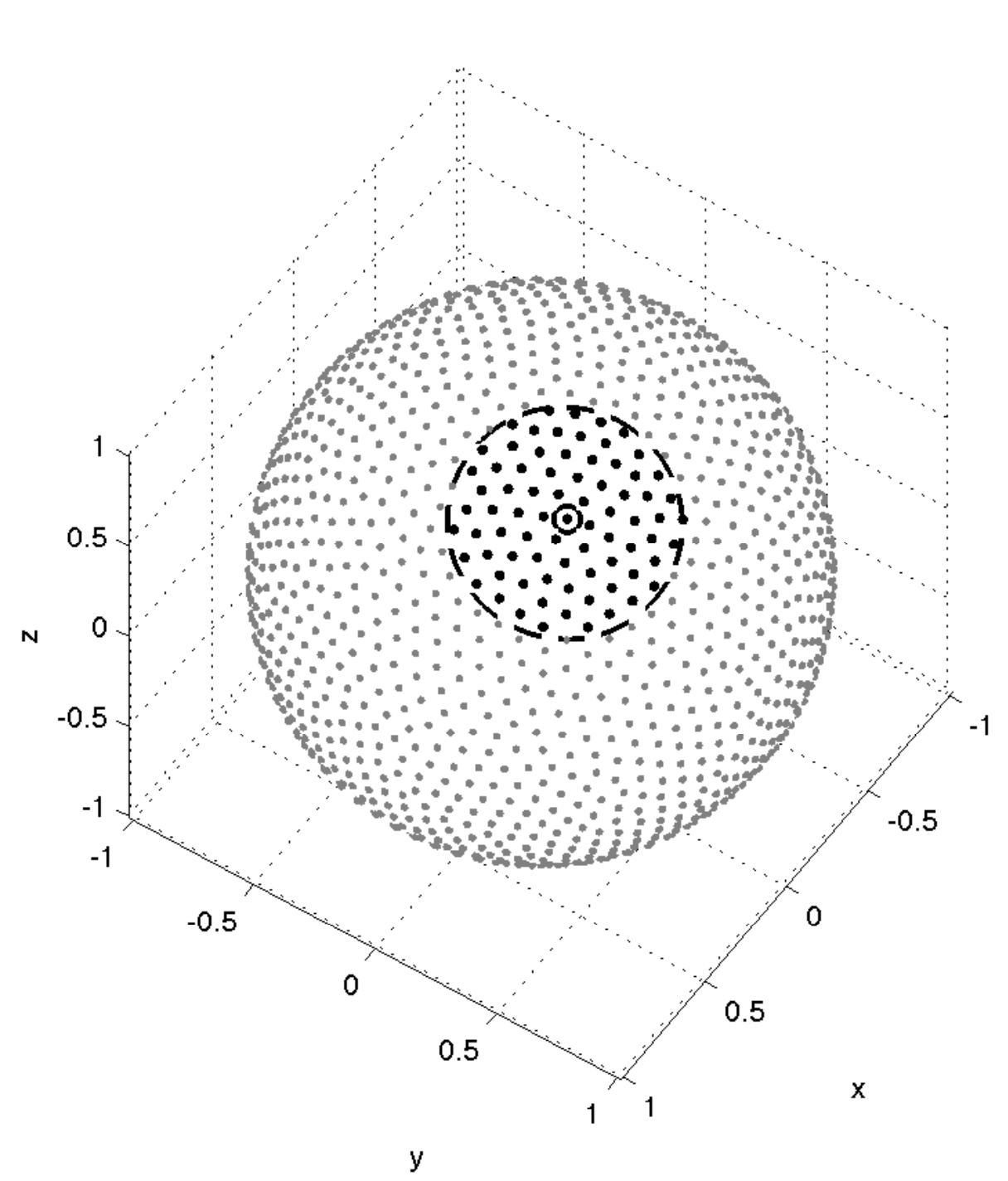} 
\caption{Illustration of the centers that make up the local Lagrange
  basis.  The solid gray and black spheres mark the set of $N$ nodes
  making up $\Xi$.  The solid black sphere with a circle around it
  marks the node $\xi$ where a local Lagrange function $\achi_{\xi}$
  is to be computed.  The the solid black spheres enclosed in the
  dashed circular line mark the set of $n = M(\log N)^2$ centers
  $\Upsilon(\xi)$ used to compute $\achi_{\xi}$.  For each
  $\xi\in\Xi$, a similar set $\Upsilon(\xi)$ is determined for
  computing $\achi_{\xi}$. \label{fig:local_lagrange_illustration}}
\end{figure}

The new basis for $S_m(\Xi)$ will consist of the collection of all the local Lagrange functions for the nodes in $\Xi$.  It will be shown in Section \ref{loc_lag_bases} that by choosing the number of nearest neighbors to each $\xi$ as $n = M(\log N)^2$ will give a basis with sufficient locality.  The choice of $M$ is related to the polynomial rate of decay of $\achi_{\xi}$ away from its center and \emph{a priori} estimates are given for $M$ in Section \ref{loc_lag_bases}.  However, in practice it will be sufficient to choose $M$ by tuning it appropriately to get the desired rate of decay.  

The exact details of the algorithm for constructing this basis then proceed as follows:  For each $\xi\in\Xi$ 
\begin{enumerate}
\item Find the $n = M(\log N)^2$ nearest neighbors to $\xi$,  $\Upsilon(\xi)$.
\item Construct $\achi_\xi$ according to the conditions \eqref{LL_conditions}, which amounts to solving the associated linear system \eqref{LL_system} and storing the coefficients $\bfA_\xi$, $\bfc_\xi$.
\end{enumerate}
We note each set $\Upsilon(\xi)$ can be determined in $\mathcal{O}(\log N)$ operations by using a KD-tree algorithm for sorting and searching through the nodes $\Xi$.  After the initial construction of the KD-tree, which requires $\mathcal{O}(N (\log N)^2)$, the construction of all the sets $\Upsilon(\xi)$ thus takes
$\mathcal{O}(N (\log N)^2)$ operations.

Before continuing, we note that our main results, given in Theorem
\ref{loc_lag_properties} and its corollaries, depend heavily on
properties that this local Lagrange basis inherits from the \emph{full
  Lagrange basis} $\{\chi_\xi\}_{\xi\in \Xi}$. Thus, much of what
follows is spent on developing a working understanding of the full
Lagrange basis and its connections to the local Lagrange basis. Even
though the local Lagrange basis is the focus of our work, we will
delay any further mention of $\{\achi_{\xi}\}_{\xi\in\Xi}$ until
Section \ref{loc_lag_bases}.

\section{The full Lagrange basis: numerical
  observations}\label{S:LagrangeBasis}%\label{S:Examples}

In this section we numerically examine a full Lagrange basis function
$\chi_\xi$ and its associated coefficients for the kernel
$k_2(x,\alpha) = (1-x\cdot\alpha)\log(1-x\cdot \alpha)$, the second
order restricted surface spline (also known as the thin plate spline)
on $\mathbb{S}^2$. First, we demonstrate numerically that $\chi_\xi$
decays exponentially away from its center. Secondly, we provide the
initial evidence that the Lagrange coefficients decay at roughly the
same rate, which is proved later in Theorem \ref{main}.

\begin{figure}[ht]
\centering
\includegraphics[height=90mm]{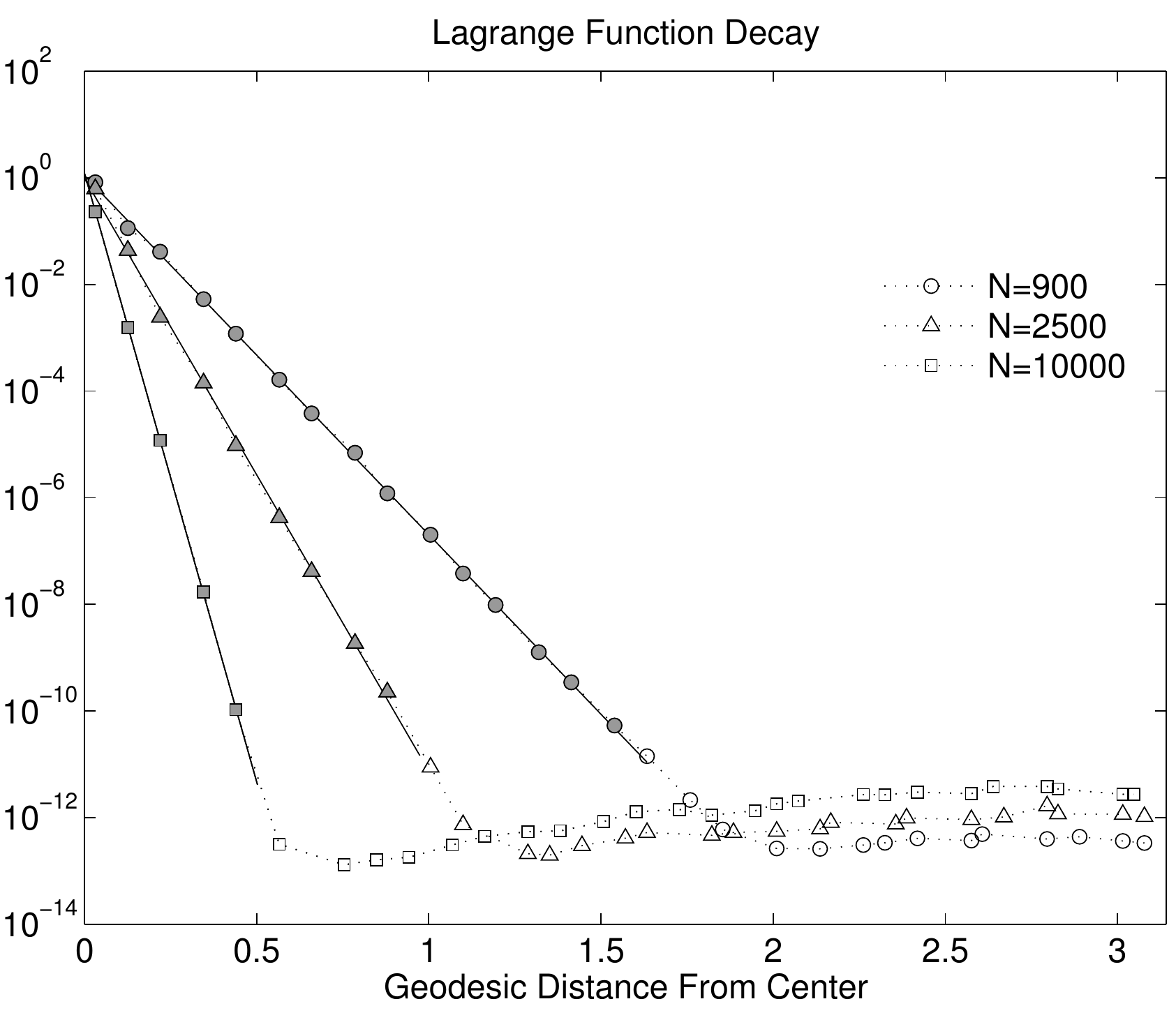}

\caption{Maximum latitudinal %($\phi\in[-\pi/2,\pi/2]$)
  values of the Lagrange function for the kernel $k_2(x,\alpha)$.
  This experiment was carried out in double precision arithmetic, and
  the plateau at roughly $10^{-11}$ occurs due to ill conditioning of
  the collocation matrices and truncation error.  }\label{ldTPS}
\end{figure}

\begin{figure}[h]
\centering
\includegraphics[height=90mm]{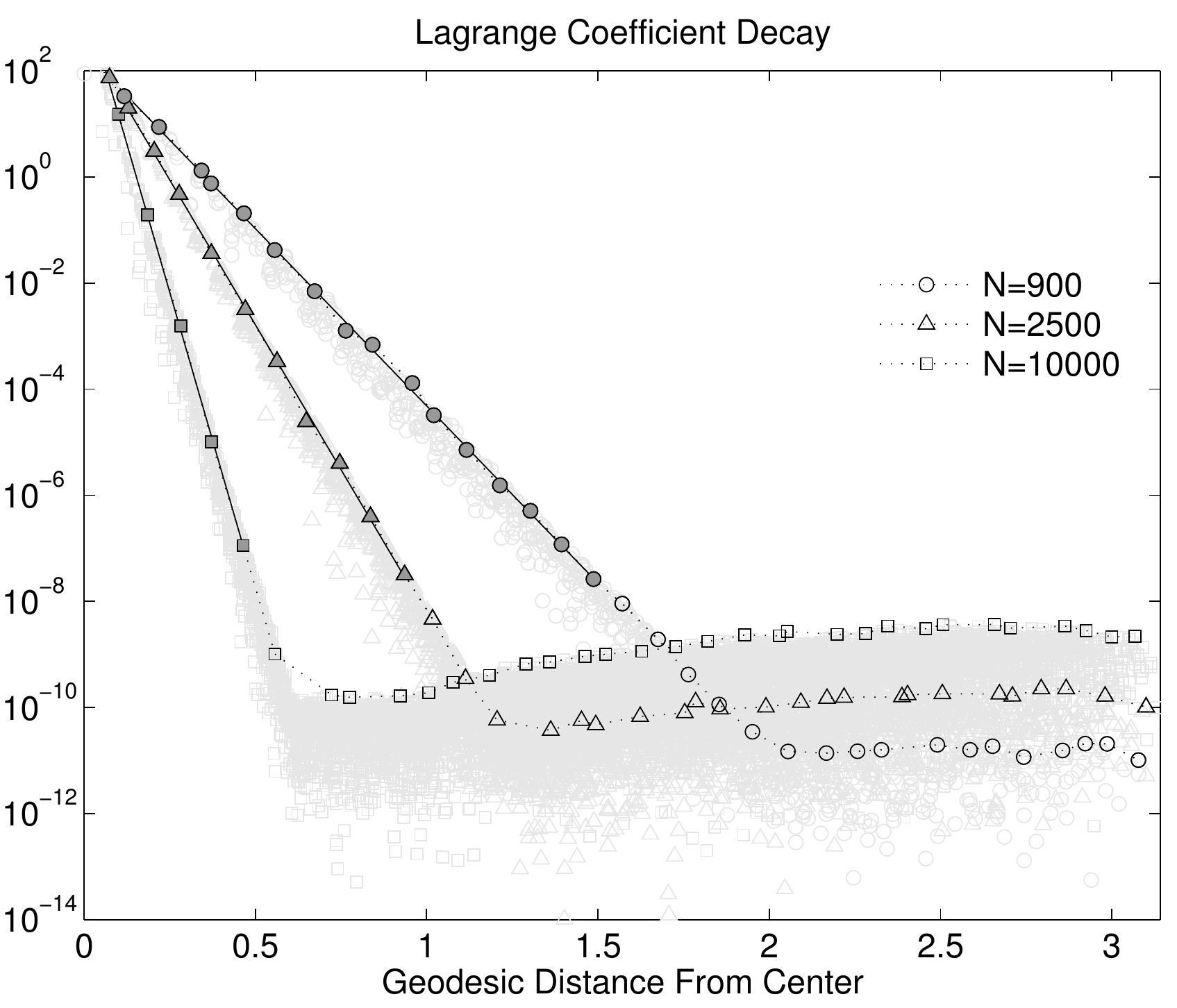}

\caption{Plot of coefficients for a Lagrange function in the kernel
  space
  $S(k_2,\Xi)$.%, where $k_2(x,\alpha) = (1-x\cdot\alpha)\log(1-x\cdot\alpha)$ is the restricted thin plate spline.
  This experiment was carried out in double precision arithmetic.
}\label{lcTPS}
\end{figure}
% In this subsection we give some numerical illustrations for the
% restricted thin plate splines. These support the estimate
% (\ref{Locality}) and demonstrate that the constants $C_L$ and
% $\nu_L$, which govern the rate of decay, are in fact quite
% reasonable. At the same time, they provide the initial evidence of
% coefficient decay, which is proved in Theorem \ref{main}.

% The second order restricted surface spline (also known as the thin
% plate spline) on $\mathbb{S}^2$ is given as $k_2(x,\alpha) =
% (1-x\cdot\alpha)\log(1-x\cdot \alpha)$.

The full Lagrange function centered at $\xi$ takes the form
$$\chi_{\xi} = \sum_{\zeta\in\Xi} A_{\xi,\zeta}k(\cdot,\zeta)
+p_{\xi},$$ where $p_{\xi}$ is a degree $1$ spherical harmonic.  In
this example, we use the ``minimal energy points'' of Womersley for
the sphere -- these are described and distributed at the website
\cite{Wom}.\footnote{These point sets are used as benchmarks: each set
  of centers has a nearly identical mesh ratio, and the important
  geometric properties (e.g., fill distance and separation distance)
  are explicitly documented.} Because of the quasi-uniformity of the
minimal energy point sets, it is sufficient to consider the Lagrange
function $\chi_{\xi}$ centered at the north pole $\xi = (0,0,1)$.

Figure \ref{ldTPS} displays the maximal colatitudinal
values\footnote{The function $\chi_{\xi}$ is evaluated on a set of
  points $(\theta_1,\theta_2)$ with $n_0$ equispaced longitudes
  $\theta_1\in [0,2\pi]$ and $n_1$ equispaced colatitudes $\theta_2\in
  [0,\pi]$.} of $|\chi_{\xi}|$. Until a terminal value of roughly
$10^{-11}$, we clearly observe the exponential decay of the Lagrange
function, which follows
\begin{equation}\label{Locality}|\chi_{\xi}(x)| \le C_L
  \exp\left(-\nu_L \frac{\d(x,\xi)}{h}\right)
\end{equation}
(this ``plateau'' at $10^{-11}$ is caused by roundoff error -- see
Figure \ref{lcFREE_TPS}). The estimate \eqref{Locality} has in fact
been proven in \cite[Theorem 5.3]{HNW2}, where this and other analytic
properties of bases for $S_m(\Xi)$ were studied in detail. By fitting
a line to the data in Figure \ref{ldTPS} where the exponential decay
is evident, one can estimate the constants $\nu_L$ and $C_L$, which in
this case are quite reasonable. For example, the value of $\nu_L$,
which measures the rate of exponential decay, is observed to be close
to $1.35$ (see Table \ref{lagrange_stats}).

We can visualize the decay of the corresponding coefficients in the
same way. We again take the Lagrange function centered at the north
pole: for each $\zeta' \in\Xi$, the coefficient $|A_{\xi, \zeta'}|$ in
the expansion $\chi_{\xi} = \sum A_{\xi,\zeta}k(\cdot,\zeta)+ p_{\xi}$
is plotted with horizontal coordinate $\text{dist}(\xi,\zeta')$.  The
results for sets of centers of size $N= 900, 2500 $ and $10000$ are
given in Figure \ref{lcTPS}. The exponential decay seems to follow
$$
|A_{\zeta,\xi} |\le C_c q^{-2} \exp{\left(-\nu_c
    \frac{\d(\xi,\zeta)}{h}\right)}.
$$
Indeed, this is established later in Theorem \ref{main}. As before, we
can estimate the constants $\nu_c$ and $C_c$ for the decay of the
coefficients. Comparing Figures \ref{ldTPS} and \ref{lcTPS}, we note
that the coefficient plot is shifted vertically. This is a consequence
of the factor of $q^{-2}$ in the estimate (\ref{Coeff}) below.  Table
\ref{lagrange_stats} gives estimates for the constants $\nu_c$ and
$C_c$, along with the constants involved in the decay of the Lagrange
functions.

\begin{table}[ht]\label{lagrange_stats}
\begin{center}
\begin{tabular}{|c||c|c|c|c|c|c|}
\hline
$N$ & $h_X$ & $\rho_X$ & $\nu_L$ & $C_L$ & $\nu_c$ & $C_c$\\ 
\hline
\hline
400 & 0.1136 & 1.2930 & 1.1119 & 0.8382  & 1.0997 & 0.5402\\
\hline 
900 & 0.0874  & 1.5302 & 1.3556 & 1.0982& 1.3445 & 0.7554 \\ 
\hline
1600 & 0.0656 & 1.5333 & 1.3513 & 1.2170 & 1.3216 & 0.5946 \\
\hline
2500 & 0.0522 & 1.5278 & 1.3345 & 0.9618& 1.3117 & 0.5494 \\
\hline
5041 & 0.0365 & 1.5304 & 1.3395 & 1.1080  & 1.3158 & 0.6188\\
\hline 
10000 & 0.0260 & 1.5421 & 1.3645 & 1.1934 & 1.3369 & 0.7291\\
\hline
\end{tabular}
\caption{Estimates of the decay constants $\nu$ and $C$ for Lagrange functions and coefficients on the sphere using the kernel $k_2(x,\alpha)$, with relevant geometric measurements of the minimum energy node sets used.}
\end{center}
\end{table}

The perceived plateau present in the Lagrange function values as well
as the coefficients shown in Figures \ref{ldTPS} and \ref{lcTPS} is
due purely to round-off error related to the conditioning of kernel
collocation and evaluation matrices.  These results were produced
using double-precision (approximately 16 digits) floating point
arithmetic.  To illustrate this point, we plot the decay rate of the
Lagrange coefficients for the 900 and 1600 point node sets as computed
using high-precision (40 digits) floating point arithmetic in Figure
\ref{lcFREE_TPS}.  The figure clearly shows that the exponential decay
does not plateau, but continues as the theory predicts (see Theorem
\ref{main}).
\begin{figure}[h]
\centering
\includegraphics[height=90mm]{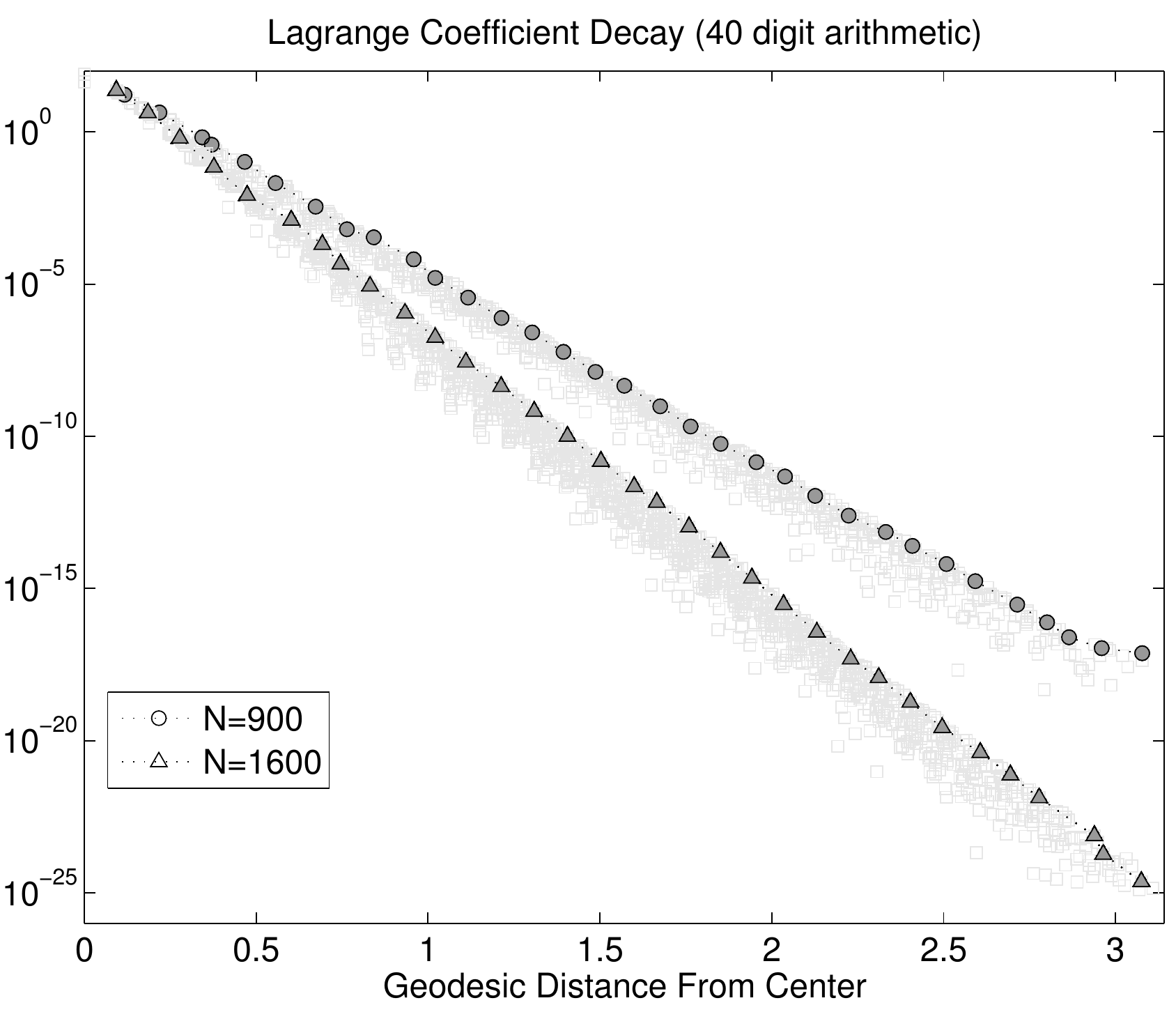}
\caption{Size of the coefficients for a Lagrange function in the kernel space $S(k_2,\Xi)$. This experiment was carried out in Maple with 40 digit arithmetic.
}\label{lcFREE_TPS}
\end{figure}

%
% 
%%%%%%%%%%%
%S Conditionally positive definite kernels
%%%%%%%%%%%

\section{Coefficients of the full Lagrange functions}\label{S:Coefficients}
In this section we give theoretical results for the coefficients
in the kernel expansion of Lagrange functions. In the first
part we give a formula relating the size of coefficients to native
space inner products of the Lagrange functions themselves
(this is  Proposition \ref{Lagrange_Coeffs_Formula}). 
We then obtain estimates for the restricted surface splines on $\sphere$, 
demonstrating the rapid, stationary decay of these coefficients.   

\subsection{Interpolation with conditionally positive definite
  kernels}\label{interpolation_kernels}

In this section we demonstrate that the Lagrange function coefficients
$A_{\xi,\zeta}$ can be expressed as a certain kind of inner product of
different Lagrange functions $\chi_{\xi}$ and $\chi_{\zeta}$.  Because
this is a fundamental result, we work in generality in this
subsection: the kernels we consider here are conditionally positive of
the type considered in Section \ref{CPD_intro}.
% definite on a compact Riemannian manifold (even this assumption can
% be relaxed substantially). As a reference on this topic, we suggest
% \cite[Section 4]{DNW}.

When $u, v\in S(k,\Xi)$ -- meaning that they  have the expansion
$
u=  \sum_{\xi\in{\Xi}} a_{1,\xi} k(\cdot,\xi) + p_u
$ 
and 
$v=  \sum_{\xi\in{\Xi}} a_{2,\xi} k(\cdot,\xi) + p_v
$
with coefficients $(a_{j,\xi})_{\xi\in\Xi} \perp (\varPi)|_{\Xi}$ for $j=1,2$ --
then the semi-inner product is
% \begin{eqnarray*}
%\left \langle u,v\right\rangle_{k} &=&
%\left \langle  \sum_{\xi\in{\Xi}} a_{1,\xi} k(\cdot,\xi) ,   \sum_{\xi\in{\Xi}} a_{2,\xi} k(\cdot,\xi) \right\rangle_{k} \\
%&=&
%\left \langle  
%\sum_{j\in\nats} \sum_{\xi\in{\Xi}} a_{1,\xi}\tilde{k}(j)\varphi_j(\xi)  \varphi_j, 
% \sum_{j\in\nats} \sum_{\xi\in{\Xi}} a_{2,\xi}\tilde{k}(j)  \varphi_j(\xi) \varphi_j 
%\right \rangle_{k}\\
%&=&\sum_{j\notin \J} \frac{ \left( \sum_{\xi\in{\Xi}} a_{1,\xi}  \tilde{k}(j) \varphi_j(\xi)  \right)\left( \sum_{\xi\in{\Xi}} a_{2,\xi}  \tilde{k}(j) \varphi_j(\xi) \right)}{\tilde{k}(j)}
%=
%\sum_{\xi\in\Xi}\sum_{\zeta\in\Xi} a_{1,\xi}\overline{a_{2,\zeta}} k(\xi,\zeta)
%\end{eqnarray*}
\begin{equation*}
  \left \langle u,v\right\rangle_{k} =
  \left \langle  \sum_{\xi\in{\Xi}} a_{1,\xi} k(\cdot,\xi) ,   
    \sum_{\xi\in{\Xi}} a_{2,\xi} k(\cdot,\xi) \right\rangle_{k} 
  =
  \sum_{\xi\in\Xi}\sum_{\zeta\in\Xi} a_{1,\xi}\overline{a_{2,\zeta}} k(\xi,\zeta).
\end{equation*}
(This follows directly from the definition (\ref{NS_norm}) coupled
with the observation that for $j\notin \J$, $\hat{u}(j) =
\sum_{\xi\in{\Xi}} a_{1,\xi}\tilde{k}(j)\phi_j(\xi) $ and $\hat{v}(j)
= \sum_{\xi\in{\Xi}} a_{2,\xi}\tilde{k}(j)\phi_j(\xi)$.)  We can use
this expression of the inner product to investigate the kernel
expansion of the Lagrange function.
\begin{proposition} \label{Lagrange_Coeffs_Formula} Let $k(\cdot,\xi)
  = \sum_{j\in \nats} \tilde{k}(j)
  \phi_j(\cdot)\overline{\phi_j(\xi)}$ be a conditionally
  positive definite kernel with respect to the space $\varPi
  =\spam_{j\in\J} \phi_j$, and let $\Xi$ be unisolvent for
  $\varPi$.  Then $\chi_{\eta}\in S(k,\Xi)$ (the Lagrange function
  centered at $\eta$) has the kernel expansion $\chi_{\eta}(x) =
  \sum_{\xi\in\Xi} A_{\eta,\xi} k(x,\xi) + p_{\zeta}$ with
  coefficients $$\bfA_{\eta} = (A_{\eta,\xi})_{\xi\in\Xi} =
  \bigl(\langle \chi_{\zeta}(x),
  \chi_{\eta}(x)\rangle_{k}\bigr)_{\xi\in\Xi}.$$
\end{proposition}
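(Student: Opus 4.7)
The key is to use the formula for $\langle u, v\rangle_k$ displayed just before the proposition, taking $v = \chi_\eta$ and exploiting two facts about Lagrange functions: their cardinality on $\Xi$, and the moment condition on the coefficient vector of any element of $S(k,\Xi)$.

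First I would fix an arbitrary $u \in S(k,\Xi)$ with expansion $u = \sum_{\xi \in \Xi} a_\xi k(\cdot, \xi) + p_u$, where $(a_\xi)_{\xi \in \Xi} \perp \varPi|_{\Xi}$, and compute $\langle u, \chi_\eta\rangle_k$ using the displayed formula. Writing $\chi_\eta = \sum_{\zeta \in \Xi} A_{\eta,\zeta} k(\cdot,\zeta) + p_\eta$, this gives
\[
\langle u, \chi_\eta\rangle_k \;=\; \sum_{\xi \in \Xi} a_\xi \, \overline{\Bigl(\sum_{\zeta\in\Xi} A_{\eta,\zeta} k(\zeta,\xi)\Bigr)}.
\]
Next I would collapse the inner sum by using cardinality: since $\chi_\eta(\xi) = \sum_\zeta A_{\eta,\zeta} k(\xi,\zeta) + p_\eta(\xi) = \delta_{\eta,\xi}$, we get $\sum_\zeta A_{\eta,\zeta} k(\xi,\zeta) = \delta_{\eta,\xi} - p_\eta(\xi)$. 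Substituting yields
\[
\langle u, \chi_\eta\rangle_k \;=\; \sum_{\xi\in\Xi} a_\xi \, \overline{\delta_{\eta,\xi} - p_\eta(\xi)} \;=\; a_\eta \;-\; \sum_{\xi\in\Xi} a_\xi \, \overline{p_\eta(\xi)}.
\]
Because $p_\eta \in \varPi$ and $(a_\xi) \perp \varPi|_{\Xi}$, the second sum vanishes, so $\langle u, \chi_\eta\rangle_k = a_\eta$.

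Finally I would apply this identity with $u := \chi_\xi$, whose kernel coefficients are $(A_{\xi,\zeta})_{\zeta \in \Xi}$, to obtain $\langle \chi_\xi, \chi_\eta\rangle_k = A_{\xi,\eta}$. By the (Hermitian) symmetry of the semi-inner product together with the symmetry of $k$, this equals $\langle \chi_\eta, \chi_\xi\rangle_k$, giving the formula in the stated indexing.

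The argument is essentially a one-line computation once one recognizes the right test function; the only thing to be careful about is verifying that the Lagrange coefficients themselves satisfy the moment condition $(A_{\xi,\zeta})_\zeta \perp \varPi|_{\Xi}$ (which is part of the definition of $S(k,\Xi)$), so that the bilinear formula for $\langle\cdot,\cdot\rangle_k$ on the subspace is legitimately applicable. There is no substantial obstacle; the lemma is really a bookkeeping consequence of the reproducing-type identity implicit in the conditionally positive definite inner product.
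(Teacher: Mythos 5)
Your proof is correct. It reaches the same conclusion as the paper's but by a slightly different route: the paper phrases everything in terms of the orthogonal projector $P^\perp = \mathrm{Id} - \Phi(\Phi^*\Phi)^{-1}\Phi^*$ and manipulates $\langle \K\bfA_\zeta, \bfA_\eta\rangle_{\ell_2(\Xi)}$ by repeatedly inserting $P^\perp$ and using its self-adjointness together with the identity $\K\bfA_\zeta + p_\zeta|_\Xi = \bfe_\zeta$. You instead substitute the cardinality identity $\sum_\zeta A_{\eta,\zeta} k(\cdot,\zeta) = \delta_{\eta,\cdot} - p_\eta(\cdot)$ directly into the bilinear formula, then kill the polynomial remainder with the moment condition $(a_\xi)\perp\varPi|_\Xi$. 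The two arguments rely on exactly the same two ingredients (cardinality of $\chi_\eta$ and the constraint on coefficient vectors), but your version isolates a cleaner intermediate fact — $\langle u, \chi_\eta\rangle_k = a_\eta$ for \emph{any} $u\in S(k,\Xi)$, a reproducing/coefficient-extraction formula not stated in the paper — and avoids introducing the projector explicitly. The small cost is that, since you extract the $\eta$-coefficient of the \emph{first} argument rather than the $\xi$-coefficient of the \emph{second}, you need a final appeal to the symmetry $A_{\xi,\eta} = A_{\eta,\xi}$ (which follows from the symmetry of the collocation system, or equivalently from the Hermitian symmetry of $\langle\cdot,\cdot\rangle_k$ combined with a second application of your lemma). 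You flag this, so the proof is complete.
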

\begin{proof}
  Select two centers $\zeta,\eta\in\Xi$ with corresponding Lagrange
  functions $\chi_{\zeta}$ and $\chi_{\eta}\in S(k,\Xi)$.  Denote the
  collocation and auxiliary matrices, introduced in Section
  \ref{CPD_intro}, by $\coll = \bigl(k(\xi,\zeta)\bigr)_{\zeta,\xi}$
  and $\Phi = \bigl(\phi_j(\xi)\bigr)_{\xi,j}$.  Because
  $\bfA_{\zeta}$ and $\bfA_{\eta}$ are both orthogonal to
  $(\varPi)|_{\Xi}$, we have
$$
\langle \chi_{\zeta}, \chi_{\eta}\rangle_{k} =
\sum_{\xi_1\in\Xi}\sum_{\xi_2\in\Xi}
A_{\zeta,\xi_1}\overline{A_{\eta,\xi_2} }k(\xi_1,\xi_2) = \langle
\coll \bfA_{\zeta},\bfA_{\eta}\rangle_{\ell_2(\Xi)}.
$$

Now define $P:= \Phi (\Phi^* \Phi)^{-1} \Phi^*:\ell_2(\Xi)\to
(\Pi_{\J})|_{\Xi}\subset \ell_2(\Xi)$ to be the orthogonal projection
onto the subspace of samples of $\varPi$ on $\Xi$ and let $P^{\perp} =
\mathrm{Id}-P$ be its complement.  Then for any data $\bfy$,
(\ref{Collocation_System}) yields coefficient vectors $\bfA$ and
$\bfc$ satisfying $P^{\perp} \bfA = \bfA$ and $P^{\perp} \Phi \bfc =
\bfzero$, hence $P^{\perp} \coll P^{\perp} \bfA = P^{\perp}\coll\bfA =
P^{\perp} \bfy$.  Because $P^{\perp}:\ell_2(\Xi)\to\ell_2(\Xi)$ is
also an orthogonal projector, and therefore self-adjoint, it follows
that
\begin{eqnarray*}
\langle \chi_{\zeta}(x), \chi_{\eta}(x)\rangle_{k}  
&=&
\langle \coll \bfA_{\zeta},\bfA_{\eta}\rangle_{\ell_2(\Xi)}\\
&=&
\langle \coll  \bfA_{\zeta},P^{\perp}\bfA_{\eta}\rangle_{\ell_2(\Xi)}\\
&=&
\langle P^{\perp}\coll \bfA_{\zeta},\bfA_{\eta}\rangle_{\ell_2(\Xi)}\\
&=&
\langle P^{\perp}\bfe_{\zeta},\bfA_{\eta} 
\rangle_{\ell_2(\Xi)}.
\end{eqnarray*}
In the last line, we have introduced the sequence $\bfe_{\zeta} = (\delta_{\zeta,\xi})_{\xi\in\Xi}$ for
which $\coll \bfA_{\zeta} +p_{\zeta}|_{\Xi}=\bfe_{\zeta}$ which implies that  $P^{\perp}\coll \bfA_{\zeta} = P^{\perp} \bfe_{\zeta}$. Using once more the fact that $P^{\perp}$ is self-adjoint, and that $\bfA_{\eta}$ is in its
range, we have
$$\langle \chi_{\zeta}(x), \chi_{\eta}(x)\rangle_{k, \J}  =
\langle P^{\perp}\bfe_{\zeta},\bfA_{\eta}\rangle = \langle\bfe_{\zeta},  P^{\perp}\bfA_{\eta}\rangle
= \langle \bfe_{\zeta}, \bfA_{\eta}\rangle 
$$
and the lemma follows.
\end{proof}

The next result involves estimating the norms $\|\bfa \|_{\ell_2(\Xi)}$ and $\|\bfc \|_{\ell_2(\J)}$, where $\bfa$ and $\bfc$ are as in 
(\ref{Collocation_System}).
%(\ref{eqn_coef_matrix}). 
It will be useful later, when we discuss local Lagrange functions. The notation is the same as that used in the proof above. In addition, because $k$ is a conditionally positive definite kernel for $\varPi$, the matrix $P^\perp \coll P^\perp$ is positive definite on the orthogonal complement of the range of $\Phi$. 
We will let $\vartheta$ be the minimum eigenvalue of this matrix; that is, 
\[
\vartheta := \min_{\|P^\perp \alpha\|=1} \langle P^\perp \coll P^\perp \alpha,\alpha\rangle >0.
\]

\begin{proposition} \label{inverse_norm_est} 
Suppose $\bfa$ and $\bfc$ satisfy (\ref{Collocation_System}).  Let $G_\Xi = \Phi^\ast \Phi$. Then, 
\begin{equation*}
\|\bfa\|_{\ell_2(\Xi)} \le  \vartheta^{-1}\| \bfy \|_{\ell_2(\Xi)}\le  \vartheta^{-1}\sqrt{\# \Xi}\|\bfy\|_{\ell_\infty(\Xi)} 
\end{equation*}
and
\begin{eqnarray*}
 \|\bfc \|_{\ell_2(\J)} &\le&  2\|k\|_\infty \| G_\Xi^{-1}\|^{1/2} \vartheta^{-1}\#\Xi \|\bfy\|_{\ell_2(\Xi)}\\
 &\le&  2\|k\|_\infty \| G_\Xi^{-1}\|^{1/2} \vartheta^{-1}(\#\Xi)^{3/2} \|\bfy\|_{\ell_\infty(\Xi)}.
\end{eqnarray*}
\end{proposition}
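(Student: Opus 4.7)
The plan is to decouple the block system (\ref{Collocation_System}) using the projections $P$ and $P^{\perp} = \mathrm{Id} - P$ already introduced in the proof of Proposition~\ref{Lagrange_Coeffs_Formula}, then read off both bounds with essentially no further work. The strategy is purely linear-algebraic and uses nothing more than the definition of $\vartheta$ and a crude operator-norm bound on $\coll$.

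First I would observe that the lower block $\Phi^T \bfa = \bfzero$ is precisely the statement that $\bfa$ lies in the kernel of $\Phi^*$, i.e.\ in the range of $P^{\perp}$; hence $P^{\perp}\bfa = \bfa$. Applying $P^{\perp}$ to the top block $\coll \bfa + \Phi \bfc = \bfy$ and using $P^{\perp}\Phi = 0$ (since the range of $\Phi$ sits inside the range of $P$) yields the reduced equation $P^{\perp} \coll P^{\perp} \bfa = P^{\perp} \bfy$. Pairing with $\bfa$ and invoking the definition of $\vartheta$ gives
\[
\vartheta \|\bfa\|_{\ell_2(\Xi)}^2 \;\le\; \langle P^{\perp} \coll P^{\perp} \bfa, \bfa\rangle \;=\; \langle P^{\perp}\bfy,\bfa\rangle \;\le\; \|\bfy\|_{\ell_2(\Xi)} \|\bfa\|_{\ell_2(\Xi)},
\]
which yields the first inequality. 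The $\ell_\infty$ form follows from $\|\bfy\|_{\ell_2(\Xi)} \le \sqrt{\#\Xi}\,\|\bfy\|_{\ell_\infty(\Xi)}$.

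For the coefficient vector $\bfc$, I would rewrite the top block as $\Phi \bfc = \bfy - \coll \bfa$. Since $\Phi$ has full column rank with Gram matrix $G_\Xi = \Phi^*\Phi$, the identity $\|\Phi \bfc\|^2 = \langle G_\Xi \bfc,\bfc\rangle \ge \|G_\Xi^{-1}\|^{-1} \|\bfc\|^2$ gives
\[
\|\bfc\|_{\ell_2(\J)} \;\le\; \|G_\Xi^{-1}\|^{1/2}\, \|\bfy - \coll \bfa\|_{\ell_2(\Xi)}.
\]
The triangle inequality together with the crude estimate $\|\coll\|_{\mathrm{op}} \le \#\Xi\,\|k\|_\infty$ (each entry of $\coll$ is bounded by $\|k\|_\infty$) and the $\bfa$-bound just established then produces $\|\bfy - \coll\bfa\|_{\ell_2(\Xi)} \le (1 + \#\Xi\,\|k\|_\infty\,\vartheta^{-1}) \|\bfy\|_{\ell_2(\Xi)}$. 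The stated factor of $2$ comes from the elementary observation $\vartheta \le \|\coll\|_{\mathrm{op}} \le \#\Xi\,\|k\|_\infty$, which forces $1 \le \#\Xi\,\|k\|_\infty\,\vartheta^{-1}$; converting the right-hand side to the $\ell_\infty$ norm is immediate.

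There is no serious obstacle here, only one point requiring minor care: $\vartheta$ bounds $P^{\perp}\coll P^{\perp}$ from below only on the range of $P^{\perp}$, so one must ensure the test vector lies in that subspace. This is automatic for $\bfa$ by the moment condition $\Phi^T \bfa = \bfzero$, which is the whole reason the projection argument closes cleanly.
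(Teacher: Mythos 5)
Your proof is correct and follows essentially the same route as the paper's: project the first block equation to the constraint space to extract $P^{\perp}\coll P^{\perp}\bfa = P^{\perp}\bfy$, use the definition of $\vartheta$ for the $\bfa$-bound, then isolate $\Phi\bfc$ from the top block, invert $G_\Xi$ on the range of $\Phi$, and absorb the additive $1$ into the factor $2$ via $\vartheta\le\|\coll\|\le\#\Xi\,\|k\|_\infty$. The only cosmetic difference is that the paper first applies $P$ to write $\Phi\bfc = P\bfy - P\coll\bfa$ (using $P\Phi=\Phi$), which is why the intermediate bounds there carry $\|P\bfy\|$ and $\|P^{\perp}\bfy\|$; you keep $\Phi\bfc=\bfy-\coll\bfa$ and bound directly, which is equivalent after discarding the projections anyway. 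Your explicit justification that $\vartheta\le\|\coll\|_{\mathrm{op}}\le\#\Xi\|k\|_\infty$ is a slight improvement in exposition over the paper, which merely asserts the resulting inequality.
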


\begin{proof}
From (\ref{Collocation_System}) and the fact that $P^\perp$ projects onto the orthogonal complement of the range of $\Phi$, we have that $P^\perp \coll P^\perp \bfa = P^\perp \bfy$ and that $P^\perp \bfa = \bfa$. Consequently, 
\[
\vartheta \| \bfa \|_{\ell_2(\Xi)}^2 =
\vartheta \| P^\perp \bfa \|_{\ell_2(\Xi)}^2 \le \langle P^\perp 
\coll P^\perp \bfa ,\bfa \rangle 
\le \|\bfa\|_{\ell_2(\Xi)} \| P^\perp \bfy\|_{\ell_2(\Xi)}.
\]
The bound on $\|\bfa\|_{\ell_2(\Xi)}$ follows immediately from this and the estimate $\|\bfy\|_{\ell_2(\Xi)} \le \sqrt{\# \Xi}\|\bfy\|_{\ell_\infty(\Xi)}$. 
To get the bound on $\|\bfc\|_{\ell_2(\Xi)}$, note that $\Phi \bfc =  P\bfy - P \coll \bfa$ and, hence, that
\begin{eqnarray*}
\| \Phi \bfc\|_{\ell_2(\Xi)} &\le& \|P\bfy \|_{\ell_2(\Xi)} + \|P\coll P^\perp \bfa\|_{\ell_2(\Xi)} \\
&\le& \|P\bfy \|_{\ell_2(\Xi)} + \vartheta^{-1}\|P\coll P^\perp\| \|P^\perp \bfy\|_{\ell_2(\Xi)}
\end{eqnarray*}
We also have that 
$\|\Phi \bfc\|_{\ell_2(\Xi)}^2 = \langle \Phi^\ast \Phi \bfc ,\bfc \rangle \ge \lambda_{min}(\Phi^\ast \Phi) \| \bfc \|_{\ell_2(\J)}^2$.  
However, $\lambda_{min}(\Phi^\ast \Phi) = \| (\Phi^\ast \Phi)^{-1}\|^{-1}$, which implies that 
\[
\| \bfc \|_{\ell_2(\J)} \le \| (\Phi^\ast \Phi)^{-1}\|^{1/2}  \| \Phi \bfc \|_{\ell_2(\Xi)}= \| G_\Xi^{-1}\|^{1/2}  \| \Phi \bfc \|_{\ell_2(\Xi)}
\] 
Next, note that the following inequalities hold:  $\|P\coll P^\perp\| \le \|\coll \| \le \#\Xi \|k\|_\infty$, 
$\|P\bfy\|_{\ell_2(\Xi)},\|P^\perp\bfy\|_{\ell_2(\Xi)}\le \|\bfy\|_{\ell_2(\Xi)}\le \sqrt{\# \Xi}  \|\bfy\|_{\ell_\infty(\Xi)}$, and 
$\frac{\#\Xi \|k\|_\infty}{\vartheta}\ge 1$. Applying these to the inequality
\[
\| \bfc \|_{\ell_2(\J)} \le \| G_\Xi^{-1}\|^{1/2} \big(\|P\bfy\|_{\ell_2(\Xi)}+ \#\Xi \|k\|_\infty \vartheta^{-1} \|P^\perp \bfy\|_{\ell_2(\Xi)}\big)
\]
then yields the desired bound on $\|\bfc\|_{\ell_2(\J)}$, completing the proof.
\end{proof}

%%%%%%%
%:Estimating
%%%%%%%
\subsection{Estimating Lagrange function coefficients}\label{S:Estimating}
In \cite{HNW2}, it has been shown that Lagrange functions for
restricted surface splines decay exponentially fast away from the
center. We can use these decay estimates in conjunction with
Proposition \ref{Lagrange_Coeffs_Formula} to estimate the decay of the
coefficients $|A_{\xi,\zeta}|$.

Recall that the eigenvalues of $-\Delta$ are $\lambda_\ell =
\ell(\ell+1)$. Let $Q(z) := \Pi_{\nu=1}^m(z-\lambda_{\nu-1}) =
\sum_{\nu=1}^m b_\nu z^\nu$. The kernel $k_m:\sphere\times\sphere\to
\reals$ has the expansion
\[
k_m(x, \alpha) = \sum_{\ell=0}^\infty \tilde k(\ell) \sum_{\mu=-\ell}^\ell Y_{\ell}^\mu (x)Y_\ell^\mu(\alpha),
\]
where,  for $\ell\ge m$, $\tilde k(\ell) =C_mQ(\lambda_\ell)^{-1}$, with $C_m=2^{m+1}\pi \Gamma(m)^2 $ \cite[Eqn.~3.3]{MNPW}.   From the expansion, one sees that $k_m$ is conditionally positive definite with respect to $\Pi_{m-1}$. Kernels such as $k_m$ are said to be of {\em polyharmonic or related type}; they have been
studied in \cite{HNW2}.  The kernel $k_m$ acts as the Green's function for the elliptic operator
$\L_m:= C_m^{-1} Q(-\Delta)$ (cf. \cite[Example 3.3]{HNW2}), in the sense that
$$
f = \int_{\M} k_m(\cdot,\alpha) \L_m\bigl[ f(\alpha)
-p_f(\alpha)\bigr] \dif \alpha+p_f,
$$
where $p_f$ is the orthogonal projection of $f$ onto $\Pi_{m-1}$.

{\bf The native space ``inner product" on subsets.}  In \cite{HNW2} it
was shown that for any $k\in \nats$, the operator
$(\nabla^k)^*\nabla^k$ (which involves $(\nabla^k)^*$ the adjoint --
with respect to the $L_2(\sphere)$ inner product -- of the covariant
derivative operator $\nabla^k$ which was introduced in
Section~\ref{Background_sphere}) can be expressed as $\sum_{\nu=0}^k
d_{\nu} \Delta^{\nu}$ with $d_k=(-1)^k$. Consequently, any operator of
the form $\sum_{j=0}^k c_{j} (\nabla^{j})^* \nabla^j$ can be expressed
as $\sum_{\nu=0}^k d_{\nu} \Delta^{\nu}$ with $d_k=(-1)^kc_k $ and
vice-versa:
\begin{gather}\label{invariance}
  \forall  (d_0,\dots d_m)
  \  \exists (c_0,\dots,c_m) 
  \text{ with }d_m = (-1)^m c_m\nonumber \\
  \text{ and }  \sum_{\nu=0}^m d_\nu 
  \Delta^\nu = \sum_{j=1}^m c_j(\nabla^j)^*\nabla^j. 
\end{gather}
% It follows directly that $k_m$ is conditionally positive definite
% with respect to $\Pi_{m-1}$.
Because $\L_m=C_m^{-1}Q(-\Delta)$, it follows  that $\L_m=
\sum_{j=0}^m c_j(\nabla^j)^*\nabla^j$, with $c_m=C_m^{-1}$, and so the native
space semi-inner product, introduced in (\ref{NS_norm}), can be
expressed as
$$
\langle u,v\rangle_{k_m} = \langle \L_m u, v\rangle_{L_2(\sphere)}
 = 
 \int_{\sphere} \beta(u,v)_x \dif \mu(x)
$$ 
with $\beta(u,v)_x = \sum_{k=0}^m c_{k}\langle
\nabla^{k}u,\nabla^{k}v\rangle_x$ and $c_0,\dots,c_m$ are the
appropriate constants guaranteed by (\ref{invariance}).  The latter
expression allows us to extend naturally the native space inner
product to measurable subsets $\Omega$ of $\sphere$.  Namely,
$$\langle u,v\rangle_{\Omega, k_m} :=\int_{\Omega} \beta(u,v)_x \dif \mu(x).$$
This has the desirable property of set additivity: for \ sets $A$ and
$B$ with $\mu(A\cap B) = 0$, we have $\langle u,v\rangle_{A\cup B,
  k_m} = \langle u,v\rangle_{A, k_m}+\langle u,v\rangle_{ B, k_m}.$
Unfortunately, since some of the coefficients $c_k$ may be negative,
$\beta(u,u)$ and $\langle u,u\rangle_{\Omega, k_m}$ may assume
negative values for some $u$: in other words, the bilinear form
$(u,v)\mapsto \langle u,v\rangle_{\Omega, k_m}$ is only an {\em
  indefinite} inner product.

{\bf A Cauchy-Schwarz type inequality.} 
% When restricted to a certain set in $W_2^m(\Omega)$ (the cone of
% functions having a sufficiently dense set of zeros),
When restricted to the cone of functions in $W_2^m(\Omega)$ having a
sufficiently dense set of zeros, the quadratic form $ \langle
u,u\rangle_{\Omega, k_m}$ is positive definite. We now briefly discuss
this.

When $\Omega$ has Lipschitz boundary and $u$ has many zeros, we can relate the 
quadratic form  
$\ns{u}_{\Omega,k_m}^2:=\langle u,u\rangle_{\Omega, k_m} $ 
to a Sobolev norm $\|u\|_{W_2^m(\Omega)}^2$. Arguing as in 
\cite[(4.2)]{HNW2}, 
we see that
\begin{eqnarray*}
c_m |u|_{W_2^m(\Omega)}^2
 &-&
\bigl(\max_{j\le m-1}{|c_j|}\bigr)\|u\|_{W_2^{m-1}(\Omega)}^2\\
&\le&
 \int_{\Omega}  \beta(u,u)_x \dif \mu (x) \\
&\le &
\bigl(\max_{j\le m}{|c_j|}\bigr) \|u\|_{W_2^m(\Omega)}^2.
\end{eqnarray*}
If $u|_{\Xi} = 0$ on a set $\Xi$ with $h(\Xi,\Omega) \le h_0$ with
$h_0$ determined only by the boundary of $\Omega$ (specifically the
radius and aperture of an interior cone condition satisfied by
$\partial \Omega$), Theorem A.11 of \cite{HNW2} guarantees that
$\|u\|_{W_2^{m-1}(\Omega)}^2 \le Ch^2 |u|^2_{W_2^m(\Omega)}$ with $C$
depending only on the order $m$ and the roughness of the boundary (in
this case, depending only on the aperture of the interior cone
condition). Thus, by choosing
%$h$ %sufficiently small,
$h\le h^{*}$, where $h^{*}$ satisfies the two conditions 
\begin{equation}\label{hstar}
h^*\le h_0\quad \text{and}
 \quad
 C (h^*)^2\times \bigl(\max_{j\le m}{|c_j|}\bigr) \le \frac{|c_m|}{2},
 \end{equation}  
 we have 
$$
\frac{c_m}{2}\|u\|_{W_2^m(\Omega)}^2
\le
\ns{u}_{\Omega,k_m}^2
\le 
 \left(\max_{j\le m}|c_j|\right)
 \|u\|_{W_2^m(\Omega)}^2. 
$$
  
The threshold value $h^*$ depends on the coefficients $c_j$ as well as
the radius $R_{\Omega}$ and aperture $\phi_{\Omega}$ of the cone
condition for $\Omega$.  When $\Omega$ is an annulus of sufficiently
small inner radius, the cone parameters can be replaced by a single
global constant, and $h_*$ can be taken to depend only on $c_0,\dots,
c_m$. In other words, only on $k_m$ -- cf. \cite[Corollary
A.16]{HNW2}.

A direct consequence of this is positive definiteness for such
functions, $\ns{u}_{\Omega,k_m}\ge 0$ with equality only if
$u|_{\Omega}=0$. From this, we have a version of the Cauchy-Schwarz
inequality: if $u$ and $v$ share a set of zeros $Z$ (i.e., $u|_Z =
v|_Z = \{0\}$) that is sufficiently dense in $\Omega$, then
\begin{equation}\label{C-S} \left|\langle u,v\rangle_{\Omega, k_m}
\right| \le \ns{u}_{\Omega,k_m}\ns{v}_{\Omega,k_m}
\end{equation} follows (sufficient density means that $h(Z,\Omega)<
h^*$ as above).

{\bf Decay of Lagrange functions.}
%Fortunately, Lagrange functions have dense zeros. 
\cite[Lemma 5.1]{HNW2} guarantees that the Lagrange function $\chi_{\xi}$
satisfies the {\em bulk chasing} estimate: there is a fixed constant
$0\le \epsilon<1$ so that for radii $r$ 
%less than a constant $\inj$
%depending on $\M$ (for a compact, 2-point homogeneous space, the
%injectivity radius is $\inj = \mathrm{diam}(\M)/2$) 
the estimate 
$$ \|\chi_{\xi}\|_{W_2^m(\comp(\xi,r))} \le 
\epsilon \|\chi_{\xi}\|_{W_2^m(\comp(\xi,r -\frac{h}{4h_0}))} $$ 
holds.
In other words, a fraction (roughly $1-\epsilon$) of the bulk of the tail 
$\|\chi_{\xi}\|_{W_2^m(\comp(\xi,r))}$ is to be found in the
annulus $ \b(\xi,r)\setminus \b(\xi,r -\frac{h}{4h_0})$ of width
$\frac{h}{4h_0} \propto h$ (with a constant of proportionality
$\frac{1}{4h_0}$ that depends only on  $m$). 
%and the boundary of $\Omega$).  
For $r>0$, it is possible to iterate this $n$
times, provided  $n\frac{h}{4h_0}\le r$.  It follows that there is $\nu = -
4h_0 \log \epsilon>0$ so that
\begin{equation*}
\| \chi_{\xi}\|_{W_2^m(\comp(\xi,r))}
\le \epsilon^n \|\chi_{\xi} \|_{W_2^m(\sphere)}
\le Ce^{-\nu r/h}\|\chi_{\xi} \|_{W_2^m(\sphere)}.
\end{equation*} 
By \cite[(5.1)]{HNW2} \footnote{This is simply a comparison of $\chi_{\xi}$ to a smooth
``bump'' $\phi_{\xi}$ of radius $q$ -- also an interpolant to the
delta data $(\delta_{\xi})$, but worse in the sense that
$\ns{\chi_{\xi}}_{k_m}\le \ns{\phi}_{k_m}$ -- this idea is repeated in the proof of
Theorem \ref{main}.} we have
\begin{equation}
\label{Lag_decay}
\| \chi_{\xi}\|_{W_2^m(\comp(\xi,r))}
\le
C q^{1-m} e^{-\nu \frac{r}{h}}.
\end{equation}
This leads us to our main result.%\footnote{A similar result holds for
                                 %any of the kernels with
                                 %exponentially decaying Lagrange
                                 %functions considered in
                                 %\cite{HNW,HNW2}. Specifically, it
                                 %holds for compact, 2-point
                                 %homogeneous spaces with polyharmonic
                                 %kernels satisfying $\L_m \perp \Pi$
                                 %(these were considered in
                                 %\cite{HNW2} and for any compact,
                                 %Riemannian manifold with Sobolev
                                 %splines in \cite{HNW}.}

\begin{theorem}\label{main} 
  %Let $\M$ be a compact, 2-point homogeneous manifold and let $k_m$ be
  %a kernel of polyharmonic or related type for which the operator
  %$\L_m$ annihilates $\Pi_{\J}$.
   Let $\rho>0$ be a fixed mesh ratio.
  There exist constants $h^*$, $\nu$ and $C$ depending only on $m$ and $\rho$
  so that if $h\le h^*$, then the Lagrange
  function $\chi_{\zeta} = \sum_{\xi\in\Xi} A_{\zeta,\xi}
  k_m(\cdot,\xi) + p_{\zeta}\in S_m(\Xi)$ has these properties:
\begin{align}
 % \text{\rm Decay of the Lagrange function:}\quad 
  |\chi_{\xi}| &\le
  C \exp\left(-\nu\frac{\d(x,\xi)}{h}\right). \label{lagrange_decay}\\
  %\text{\rm Decay of the Lagrange coefficients:}\quad 
  |A_{\zeta,\xi}|
  & \le C q^{2-2m} \exp{\left(-\nu
      \frac{\d(\xi,\zeta)}{h}\right)}. \label{Coeff}
  \\[3pt]
  %\text{\rm $L_p$-stability of the basis $(\chi_\xi)$:}\quad 
  c_1
  q^{2/p} \|\bfa\|_{\ell_p(\Xi)} &\le \big\|\sum_{\xi\in\Xi} a_{\xi}
  \chi_{\xi}\big\|_{L_p(\sphere)} \le c_2 q^{2/p} \|\bfa\|_{\ell_p(\Xi)}.
  \quad \label{p_stability}
\end{align}
\end{theorem}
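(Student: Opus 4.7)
The first estimate \eqref{lagrange_decay} is essentially already prepared: iterating the bulk-chasing estimate from \cite[Lemma 5.1]{HNW2} has given the $W_2^m$ decay \eqref{Lag_decay}, and one passes from a Sobolev bound on a geodesic annulus to a pointwise bound by a standard Sobolev embedding $W_2^m(B(x,\delta))\hookrightarrow L_\infty(B(x,\delta))$ (valid for $m\ge 2$ on two-dimensional domains), applied on a ball of radius $\asymp h$ centered at $x$ that is contained in the complement where \eqref{Lag_decay} already controls $\chi_\xi$.

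For the coefficient decay \eqref{Coeff}, the plan is to invoke Proposition \ref{Lagrange_Coeffs_Formula}, which rewrites
\[
A_{\zeta,\xi}=\langle \chi_\zeta,\chi_\xi\rangle_{k_m},
\]
and then to exploit that both $\chi_\zeta$ and $\chi_\xi$ share zeros on $\Xi\setminus\{\xi,\zeta\}$. Setting $r=\d(\xi,\zeta)/2$, I would split $\sphere=\b(\xi,r)\cup\comp(\xi,r)$ and use the set-additivity of $\langle\cdot,\cdot\rangle_{\Omega,k_m}$ to write
\[
A_{\zeta,\xi}=\langle \chi_\zeta,\chi_\xi\rangle_{\b(\xi,r),k_m}+\langle \chi_\zeta,\chi_\xi\rangle_{\comp(\xi,r),k_m}.
\]
Since $\Xi\setminus\{\xi,\zeta\}$ is $h$-dense in each region once $h\le h^*$, the indefinite Cauchy--Schwarz inequality \eqref{C-S} applies on each piece. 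On $\b(\xi,r)$ every point lies at distance $\ge r$ from $\zeta$, so \eqref{Lag_decay} delivers $\ns{\chi_\zeta}_{\b(\xi,r),k_m}\le Cq^{1-m}e^{-\nu r/h}$, while the paired factor is bounded crudely by $\ns{\chi_\xi}_{\sphere,k_m}\le Cq^{1-m}$ from \cite[(5.1)]{HNW2}. The roles of $\xi$ and $\zeta$ swap on the complementary region, which contains $\b(\zeta,r)$. Multiplying and summing yields $|A_{\zeta,\xi}|\le Cq^{2-2m}e^{-\nu r/h}$, which is \eqref{Coeff} after a trivial redefinition of $\nu$.

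For the $L_p$ stability \eqref{p_stability}, both inequalities are driven by \eqref{lagrange_decay}. For the upper bound, I would use
\[
\bigl|\textstyle\sum_\xi a_\xi\chi_\xi(x)\bigr|\le C\sum_\xi|a_\xi|e^{-\nu\d(x,\xi)/h}
\]
and invoke a Schur-type argument: the quasi-uniformity of $\Xi$ gives $\#\{\xi:\d(x,\xi)\in[kh,(k+1)h)\}\lesssim(k+1)(h/q)^2$, so the kernel $e^{-\nu\d(x,\xi)/h}$ has uniformly bounded row sums in $\xi$ and column sums $\int_{\sphere}e^{-\nu\d(x,\xi)/h}d\mu(x)\lesssim h^2\sim q^2$; a discrete Young inequality then yields $\|\sum a_\xi\chi_\xi\|_{L_p}\le Cq^{2/p}\|\bfa\|_{\ell_p}$. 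The lower bound is the harder half, and I anticipate it being the main obstacle: the natural attempt is to integrate $|\sum a_\xi\chi_\xi|^p$ over the disjoint caps $\b(\xi,q/3)$, bound the integrand from below by the diagonal term $|a_\xi||\chi_\xi(x)|$ minus the tail, use \eqref{lagrange_decay} to control the tail in $\ell_p$ via the same Schur estimate, and conclude via a rearrangement. To lower-bound $|\chi_\xi|$ on the cap around its own center, I would combine cardinality $\chi_\xi(\xi)=1$ with a Markov/Bernstein inequality on $S_m(\Xi)$, which shows that $\chi_\xi$ stays close to $1$ on $\b(\xi,cq)$ for $c$ small enough; alternatively, a dual-basis/duality construction produces biorthogonal functionals $\phi_\xi$ with $\|\phi_\xi\|_{L_{p'}}\lesssim q^{-2/p'}$, which gives the lower estimate by testing. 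The delicate point in either route is ensuring that the off-diagonal tails do not eat up the $q^{2/p}$ scaling.
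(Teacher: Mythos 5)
For \eqref{Coeff}, which is the only part of Theorem~\ref{main} the paper actually proves (\eqref{lagrange_decay} and \eqref{p_stability} are cited directly from \cite[Theorems~5.3 and 5.7]{HNW2}), your argument is essentially the paper's. Both start from Proposition~\ref{Lagrange_Coeffs_Formula}, split $\sphere$ into two pieces, apply the indefinite Cauchy--Schwarz inequality \eqref{C-S} on each piece, pair the factor that lives far from its own center with the tail estimate \eqref{Lag_decay}, and control the remaining factor by the global bound $\ns{\cdot}_{k_m}\le Cq^{1-m}$ coming from the bump-function comparison in \cite[(5.1)]{HNW2}. The only difference is cosmetic: you split into the caps $\b(\xi,r)$ and $\comp(\xi,r)$ with $r=\tfrac12\d(\xi,\zeta)$, whereas the paper uses the two mediatrix ``hemispheres'' $\Omega_\xi,\Omega_\zeta$. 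Since $\Omega_\zeta\subset\comp(\xi,r)$ and $\Omega_\xi\subset\comp(\zeta,r)$, both decompositions feed into \eqref{Lag_decay} in the same way, and your cap version is if anything a little cleaner. One point you should make explicit: both the Cauchy--Schwarz inequality and the step $\ns{\chi_\xi}_{\b(\xi,r),k_m}\le\ns{\chi_\xi}_{\sphere,k_m}$ need the shared zero set $\Xi\setminus\{\xi,\zeta\}$ to be $h^*$-dense in the relevant region, and each region carries a hole of radius $\sim q$ around one of the two centers; this is harmless because $q\le h\le\rho q$, but it should be noted rather than asserted.

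For the other two estimates the paper proves nothing and merely cites \cite{HNW2}, so your extra work there is supererogatory. Your Sobolev-embedding sketch for \eqref{lagrange_decay} is in fact how the cited theorem proceeds. Your outline of \eqref{p_stability} is a plan rather than a proof: the Schur/discrete-Young upper bound is solid, but the lower bound --- which you correctly flag as the real obstacle --- is not carried out, and that is exactly where the $q^{2/p}$ scaling can be lost if the tail subtraction or the dual-functional norm bound is done sloppily. Since the paper offloads this to \cite{HNW2}, this gap does not affect your proof of the theorem as stated, but you should be aware that the paper's burden here is only \eqref{Coeff}.
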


\begin{proof}
  The bounds (\ref{lagrange_decay}) and (\ref{p_stability}) are given
  in \cite[Theorems~5.3 \& 5.7]{HNW2}. Only (\ref{Coeff}) requires
  proof. By Proposition \ref{Lagrange_Coeffs_Formula} and set
  additivity, we have that
\[
A_{\zeta,\xi} = \langle \chi_{\xi},\chi_{\zeta} \rangle_{ k_m} =
\langle \chi_{\xi},\chi_{\zeta} \rangle_{ \Omega_{\zeta} ,k_m} +
\langle \chi_{\xi},\chi_{\zeta} \rangle_{ \Omega_{\xi}, k_m},
\]
where we employ the %use the decomposition into 
hemispheres: $\Omega_{\zeta} =
\left\{ \alpha \in \sphere\mid \d(\alpha,\zeta) <
  \d(\alpha,\xi)\right\},$ $\Omega_{\xi} = \left\{ \alpha \in
  \sphere\mid \d(\alpha,\xi) < \d(\alpha,\zeta)\right\}. $ Modulo
a set of measure zero, $\Omega_{\xi}=\sphere\setminus\Omega_{\zeta} 
$.
%For a
%compact, 2-point homogeneous space, $\Omega_{\zeta}$ and
%$\Omega_{\xi}$ are two balls of radius $\mathrm{diam}(\M)/2$.

We apply the Cauchy--Schwarz type inequality (\ref{C-S}) to obtain
\begin{eqnarray*} |A_{\zeta,\xi} | &\le&
\ns{\chi_{\zeta}}_{ \Omega_{\zeta} ,k_m}
 \ns{\chi_{\xi}}_{ \Omega_{\zeta} ,k_m}+
\ns{\chi_{\zeta}}_{ \Omega_{\xi}, k_m} 
\ns{\chi_{\xi}}_{ \Omega_{\xi}, k_m}\\
&\le&
%\left(
\sqrt{\max_{j\le m}|c_j|}%\right)
\left(\|\chi_{\zeta}\|_{ W_2^m(\Omega_{\zeta} )}
 \ns{\chi_{\xi}}_{ \Omega_{\zeta} ,k_m }+
\ns{\chi_{\zeta}}_{ \Omega_{\xi}, k_m} 
\|\chi_{\xi}\|_{ W_2^m(\Omega_{\xi})}\right)
\end{eqnarray*}

Since $ \Omega_{\zeta} \subset \b^c(\zeta,r) := \sphere\setminus
\b\left(\zeta,\frac{1}{2} \d(\xi,\zeta)\right) $ 
and 
$\Omega_{\xi} \subset \b^c(\xi,r)$, % := \sphere\setminus \b\left(\xi,\frac{1}{2} \d(\xi,\zeta)\right) $, 
we can again employ set additivity and
positive definiteness (this time $ \ns{\chi_{\xi}}_{ \Omega_{\zeta} ,k_m }\le \ns{\chi_{\xi}}_{ \sphere ,k_m } $, 
which follows from
the fact that $\sphere = \Omega_{\zeta} \cup \overline{\Omega_{\xi}}$ and
that $\chi_{\xi}$ vanishes to high order in $\Omega_{\xi}$ -- the same
holds for $\chi_{\zeta}$) to obtain
$$ |A_{\zeta,\xi} | \le\sqrt{\max_{j\le m}|c_j|}\left(\|\chi_{\zeta}\|_{ W_2^m(\b^c(\zeta,r) ) } \ns{\chi_{\xi}}_{ k_m}+
\ns{\chi_{\zeta}}_{  k_m} \|\chi_{\xi}\|_{W_2^m( \b^c(\xi,r)   )}\right).$$

The full energy of the Lagrange function can be bounded by comparing
it to the energy of a bump function -- for $\chi_{\xi}$ this is
$\phi_{\xi}$, which can be defined %on the tangent space
by using a smooth cutoff function $\sigma$. In spherical coordinates
(colatitude, longitude) around $\xi$,
%$\phi_{\xi}\circ \mathrm{Exp}_{\xi} (x) = \sigma(|x|/q).$ 
$\phi_{\xi}(\theta,\varphi) = \sigma(\theta/q).$ 
This is done in
\cite[(5.1)]{HNW2} and we have that $\ns{\chi_{\xi}}_{ k_m}$
and $\ns{\chi_{\zeta}}_{ k_m}$ are bounded by $C q^{1-m}$.

On the other hand, we can employ (\ref{Lag_decay}) to treat 
$\|\chi_{\zeta}\|_{ W_2^m(\b^c(\zeta,r) ) }$ and 
$ \|\chi_{\xi}\|_{ W_2^m(\b^c(\zeta,r) ) }$, which gives
\[
\| \chi_{\xi} \|_{ W_2^m(\b^c\left(\zeta,r\right)) }, \| \chi_{\zeta}
\|_{ W_2^m(\b^c\left(\zeta,r\right)) } \le C q^{1-m} e^{-\nu
  \frac{r}{h}} = C q^{1-m} e^{-\nu \frac{\d(\xi,\zeta)}{2h}}.
\]
The bound (\ref{Coeff}) follows immediately from this.
\end{proof}

\begin{remark}\label{general_manifold_main} 
{\rm Because the proof doesn't really depend on $\sphere$, a nearly
identical proof works for any of the kernels with exponentially
decaying Lagrange functions considered in
\cite{HNW,HNW2}. Specifically, we have this:} Theorem~\ref{main} holds
for compact, 2-point homogeneous spaces with polyharmonic kernels
satisfying $\L_m \perp \Pi$ (cf.~\cite{HNW2}) and for any compact,
$C^\infty$ Riemannian manifold, with the kernels being the Sobolev
splines given in \cite{HNW}.
\end{remark}
\section{Truncating the Lagrange basis}\label{S:better_basis}

We now discuss truncating the kernel expansion Lagrange function
$\chi_\xi = \sum_{\zeta\Xi} A_{\xi,\zeta}k_m(\cdot,\zeta)+p_\xi\in
S_m(\Xi)$, replacing it with an expansion of the form
\begin{equation}
\label{approx_lagrange}
\widetilde \chi_\xi = \sum_{\zeta\in\Upsilon(\xi)} \widetilde A_{\xi,\zeta}
k_m(\cdot,\zeta)+p_\xi \in S_{m}(\Xi),
\end{equation}
where $\Upsilon(\xi)\subset \Xi$ is a set of centers contained in a
ball $B(\xi,r(h))$ centered at $\xi$, where $r(h)$ and the $\widetilde
A_{\xi,\zeta}$'s will be determined by $A_{\xi,\zeta}$, with $\zeta\in
\Upsilon(\xi)$.  We also assume that $\xi\in \Upsilon(\xi)$.  Finally,
to avoid notational clutter, we will simply use $\Upsilon$ rather than
$\Upsilon(\xi)$.

Our goal is to show that if $\chi_\xi$ satisfies the properties
(\ref{lagrange_decay}), (\ref{Coeff}), and (\ref{p_stability}), then
we may take $r(h)=Kh|\log(h)|$, with $K=K(m)>0$, while maintaining
algebraic decay in $h$ of the error $\|\widetilde \chi_\xi -
\chi_\xi\|_\infty $. For this choice of $r(h)$, a simple volume
estimate (given at the end of Section \ref{loc_lag_bases}) shows that
the number of terms required for $\widetilde \chi_\xi$ is just
$\calo((\log N)^2) \ll N$, far fewer than the $N$ needed for
$\chi_\xi$.

% , which uses $h/q=\rho$ and $N=\calo(h^{-2})$, shows that the number
% of centers required to construct $\widetilde \chi_\xi$ is just
%%\begin{equation}
%\# \Upsilon  = \calo\big((Kh|\log h|)^2/q^2\big) = \calo(|\log h|^2) =
%\calo((\log N)^2) \ll N, \label{cardinality_Upsilon}
%\end{equation}
%many fewer than the $N$ needed for $\chi_\xi$.

Simply truncating at a fixed radius $r(h)=Kh|\log(h)|$ is not
suitable, however, because the truncated function $\widetilde
\chi_\xi$ will no longer be in the space $S_{m}(\Xi)$ (and thus
$\{\widetilde\chi_\xi\}$ will not act as a basis). To treat this, we
must slightly realign coefficients to satisfy the moment conditions.

A remark before proceeding with the analysis: Finding $\widetilde
\chi_\xi$ in the way described below requires knowing the expansion
for $\chi_\xi$ and carrying out the truncation above. This is
expensive, although it does have utility in terms of speeding up
evaluations for interpolation when the same set of centers is to be
used repeatedly. The main point is that we now know roughly how many
basis elements are required to obtain a good approximation to
$\chi_\xi$. The question of producing a good basis efficiently is left
to the next section.
%The situation is much better for a number of special
%cases, for example certain thin-plate splines on $\sph^d$. We will
%discuss these cases in detail in section~\ref{loc_lag_bases}

%%%%%%%%%%%%%%%%%%%%%%%%%%%%%%%%%%%%%%%%%%%%%%%%%%%%%%%%%%%%
\subsection{Constraint conditions on the
  coefficients}\label{realigning}
%%%%%%%%%%%%%%%%%%%%%%%%%%%%%%%%%%%%%%%%%%%%%%%%%%%%%%%%%%%%
We would like $\widetilde \chi_{\xi}$ to be in the space $S_{m}(\Xi)$,
and so the $\widetilde A_{\xi,\zeta}$'s have to satisfy the
constraints in the system (\ref{Collocation_System}):
\begin{equation}
\label{tilde_A_constraints}
\sum_{\zeta\in\Upsilon } \widetilde A_{\xi,\zeta} \overline{\phi_j}(\zeta) = 0, \ j\in \J:=(1,\dots,m^2),
\end{equation}
where $\{\phi_j\}_{j=1}^{m^2}$ is an orthonormal basis for $\Pi_{m-1}$. Since the original $\chi_\xi$'s are in $S_{m}(\Xi)$, the
$A_{\xi,\zeta}$'s in their expansions satisfy the constraint equations
in (\ref{Collocation_System}). Splitting these equations into sums
over $\Upsilon $ and its complement in $\Xi$ and manipulating the
result, we see that
\begin{equation}
\label{A_constraints}
 \sum_{\zeta \in \Upsilon } A_{\xi,\zeta} \overline{\phi_j}(\zeta) +\sigma_j, \quad \text{where } 
 \sigma_j :=   \sum_{\zeta \not\in \Upsilon } A_{\xi,\zeta} \overline{\phi_j}(\zeta), \ j\in \J .
\end{equation}
The way that we will relate the two sets of coefficients is to define
the vector $(\widetilde A_{\xi,\zeta})_{\zeta\in \Upsilon }$ to be the
orthogonal projection of $(A_{\xi,\zeta})_{\zeta\in \Upsilon }$ onto
the constraint space, which is the orthogonal complement of $\spam\{
\left.\phi_j\right|_{\Upsilon}%(\varphi_j(\zeta))_{\zeta\in \Upsilon }
, j\le m^2\}$, in the usual inner product for $\ell_2(\Upsilon)$.  The
equations below then follow:
\begin{equation}
\label{least_squares_A}
\begin{aligned}
(\widetilde A_{\xi,\zeta})_{\zeta\in \Upsilon } - (A_{\xi,\zeta})_{\zeta\in \Upsilon }
&= \sum_{j\in \J} \tau_j \phi _j |_{\Upsilon } \in \spam\{(\phi_j(\zeta))_{\zeta\in \Upsilon }, j\le m^2\} \\
\|(\widetilde A_{\xi,\zeta})_{\zeta\in \Upsilon } - (A_{\xi,\zeta})_{\zeta\in \Upsilon }\|_{\ell_2(\Upsilon)}^2 & = \tau^* G_{\Upsilon }\tau , \ \   [G_{\Upsilon }]_{k,j} := \textstyle{\sum_{\zeta\in \Upsilon }} \overline{\phi_k}(\zeta)\phi_j(\zeta),
\end{aligned}
\end{equation}
where $\tau$ is a column vector having the $\tau_j$'s as entries. Let
$\sigma$ be a column vector with the $\sigma_j$'s as entries. From the
first equation above together with equations
(\ref{tilde_A_constraints}) and (\ref{A_constraints}), $\tau$ and
$\sigma$ are related by $\sigma=G_{\Upsilon }\tau$. If we make the
rather mild assumption that $\Upsilon $ is unisolvent for the space
$\Pi_{m-1}$, then we can invert $G_{\Upsilon }$: $\tau=G_{\Upsilon
}^{-1}\sigma$, thereby obtaining $\tau^* G_{\Upsilon }\tau = \sigma^*
G_{\Upsilon }^{-1}\sigma$.  Using this in (\ref{least_squares_A}) and
applying Schwarz's inequality,
%Cauchy-Schwarz,
we obtain this bound:
\begin{equation}
\label{est_on_inner_part}
\big\|  \textstyle{\sum_{\zeta\in\Upsilon }} (\widetilde A_{\xi,\zeta} - A_{\xi,\zeta})
k_{m}(\cdot,\zeta)\big\|_\infty \le \sqrt{ \#\Upsilon \, \| G_{\Upsilon }^{-1}\|_2} \, \|k_{m}\|_\infty \|\sigma\|_2,
\end{equation}
which we will make use of to establish the estimates below.

\begin{proposition} \label{chi_tilde_properties}
  % Suppose that a kernel $\kappa$ is such that $\chi_\xi$ satisfies
  % the three conditions in (\ref{lagrange_decay}), (\ref{Coeff}), and
  % (\ref{p_stability}). In addition, for $\J\neq \emptyset$,
Assume that $\Upsilon $ is unisolvent for $\Pi_{m-1}$ and that 
% that the $\varphi_j$'s are uniformly bounded, and that
$\| G_{\Upsilon }^{-1}\|_2=\calo(|\log h|^{-2}\,h^{-2\mu})$, for some
$\mu\ge 0$.  If we take $r(h)=K h |\log(h)|$, where $K$ is chosen so
that $J:= K\nu - 2m-\mu>0$ then for $h$ sufficiently small,
\begin{gather}
\|\widetilde \chi_\xi - \chi_\xi\|_\infty \le C h^{J} \label{error_tilde_chi}\\
|\widetilde \chi_{\xi}(x)| \le C\big(1+\d(x,\xi)/h\big)^{-J} \label{tilde_chi_bnd}
\end{gather}
Furthermore, when $J>2$, the set $\{\widetilde \chi_\xi\}$ is $L_p$ stable: there are $C_1,C_2>0$ for which
\begin{equation}
\label{p_stable_J}
C_1 q^{2/p} \|\bfa\|_{\ell_p(\Xi)}
\le 
\big\|\textstyle{\sum_{\xi\in\Xi}} a_{\xi} \widetilde \chi_{\xi}\big\|_{L_p(\sphere)}
\le 
C_2 q^{2/p} \|\bfa\|_{\ell_p(\Xi)}.
\end{equation}
\end{proposition}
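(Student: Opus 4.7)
The three claims follow in order from one central computation and two shorter transformations. For (\ref{error_tilde_chi}), decompose
\[
\widetilde\chi_\xi - \chi_\xi = \sum_{\zeta\in\Upsilon}(\widetilde A_{\xi,\zeta}-A_{\xi,\zeta})\,k_m(\cdot,\zeta)-\sum_{\zeta\notin\Upsilon}A_{\xi,\zeta}\,k_m(\cdot,\zeta)
\]
into a ``correction'' piece and a ``tail'' piece. The tail is controlled using the exponential coefficient decay (\ref{Coeff}) of Theorem \ref{main}: since $q$ is comparable to $h$ under the fixed mesh-ratio hypothesis, counting nodes of $\Xi$ in annular shells of width $h$ at distance $jh$ from $\xi$ gives $\sum_{\zeta\notin\Upsilon}|A_{\xi,\zeta}|\le Cq^{2-2m}|\log h|\,h^{\nu K}$, and multiplying by $\|k_m\|_\infty$ bounds its sup norm. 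The same count, together with $\|\phi_j\|_\infty\le C$, controls each $|\sigma_j|=|\sum_{\zeta\notin\Upsilon}A_{\xi,\zeta}\overline{\phi_j}(\zeta)|$ and hence $\|\sigma\|_2$. Feeding this into (\ref{est_on_inner_part}) and using the hypotheses $\#\Upsilon\le C|\log h|^2$ and $\|G_\Upsilon^{-1}\|_2=O(|\log h|^{-2}h^{-2\mu})$, the factor $\sqrt{\#\Upsilon\,\|G_\Upsilon^{-1}\|_2}$ is of order $h^{-\mu}$. Recalling $J=K\nu-2m-\mu$, both pieces contribute at worst $C|\log h|\,h^{J+2}$, which is $\le Ch^J$ for $h$ small, proving (\ref{error_tilde_chi}).

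For (\ref{tilde_chi_bnd}), apply the triangle inequality $|\widetilde\chi_\xi|\le|\chi_\xi|+|\widetilde\chi_\xi-\chi_\xi|$ together with (\ref{lagrange_decay}) and (\ref{error_tilde_chi}). The elementary estimate $e^{-\nu t}\le C_J(1+t)^{-J}$ for $t\ge 0$ converts the exponential decay of $\chi_\xi$ into the desired polynomial form $C(1+\d(x,\xi)/h)^{-J}$, while for the error term the bound $\d(x,\xi)\le\pi$ on $\sphere$ yields $(1+\d(x,\xi)/h)^{-J}\ge(1+\pi/h)^{-J}\ge ch^J$, so $Ch^J$ is itself dominated by a constant multiple of $(1+\d(x,\xi)/h)^{-J}$. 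Combining the two gives (\ref{tilde_chi_bnd}).

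For the $L_p$ stability (\ref{p_stable_J}), compare with (\ref{p_stability}) from Theorem \ref{main} via a perturbation argument applied to the operator $Ta:=\sum_\xi a_\xi(\widetilde\chi_\xi-\chi_\xi)$. Combining the uniform bound $\|\widetilde\chi_\xi-\chi_\xi\|_\infty\le Ch^J$ with the polynomial decay $(1+\d(x,\xi)/h)^{-J}$ supplied by (\ref{lagrange_decay}) and (\ref{tilde_chi_bnd}) yields the composite pointwise estimate
\[
|\widetilde\chi_\xi(x)-\chi_\xi(x)|\le C\min\{h^J,\,(1+\d(x,\xi)/h)^{-J}\}.
\]
Splitting integrals and sums at $\d(x,\xi)\sim 1$, where the two terms in the minimum are comparable, and invoking the convergence afforded by $J>2$ produces the Schur-type bounds
\begin{align*}
\sup_x \sum_\xi \min\{h^J,(1+\d(x,\xi)/h)^{-J}\} &\le C h^{J-2},\\
\sup_\xi \int_\sphere \min\{h^J,(1+\d(x,\xi)/h)^{-J}\}\,d\mu &\le C h^J,
\end{align*}
which are the $\ell_\infty\to L_\infty$ and $\ell_1\to L_1$ operator norms of $T$. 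Riesz--Thorin interpolation then gives $\|Ta\|_{L_p}\le Ch^{J-2}q^{2/p}\|a\|_{\ell_p}$, which is $o(q^{2/p}\|a\|_{\ell_p})$ as $h\to 0$. Absorbing this perturbation into $c_1,c_2$ from (\ref{p_stability}) yields the two-sided bound (\ref{p_stable_J}). The main obstacle is this Schur step: making the perturbation strictly smaller than $c_1 q^{2/p}$ requires convergence of the relevant tail sums and integrals, which is precisely where the assumption $J>2$ enters essentially.
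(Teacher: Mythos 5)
Your argument matches the paper's proof in structure for all three parts: the same split of $\widetilde\chi_\xi-\chi_\xi$ into a correction piece controlled by \eqref{est_on_inner_part} and a tail piece controlled by \eqref{Coeff}, the same $e^{-\nu t}\le C_J(1+t)^{-J}$ conversion for \eqref{tilde_chi_bnd}, and the same strategy of estimating the $\ell_1\to L_1$ and $\ell_\infty\to L_\infty$ norms of the perturbation operator and then interpolating and absorbing into \eqref{p_stability}. The only minor variation is in the $L_p$ stability step, where the paper obtains the $\ell_\infty\to L_\infty$ bound crudely via $N\max_\xi\|\widetilde\chi_\xi-\chi_\xi\|_\infty\sim q^{-2}h^J$, whereas you first derive a pointwise $\min\{h^J,(1+\d(x,\xi)/h)^{-J}\}$ bound and run a Schur test — more work than necessary, but it yields the same $O(h^{J-2})$ perturbation and the same conclusion.
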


\begin{proof} From (\ref{Coeff}) and $N\le 4\pi/\mathrm{vol}(B(\xi,q))\le Cq^{-2}$, we have that
\begin{equation}
\label{A_l1_estimate}
\sum_{\zeta \not\in \Upsilon} |A_{\xi,\zeta}| =\calo\big( Nq^{2-2m} \exp(-\nu r(h)/h)\big) \le C h^{K\nu - 2m}.
\end{equation}
Applying it to the  $\sigma_j$'s defined in (\ref{A_constraints}) results in $
\|\sigma\|_2 \le C  h^{K\nu - 2m} $.  Using this in connection with (\ref{est_on_inner_part}), $\| G_{\Upsilon }^{-1}\|_2=\calo(|\log h|^{-2}\,h^{-2\mu})$, (\ref{A_l1_estimate}) and 
\[
\widetilde \chi_\xi-\chi_\xi =  \sum_{\zeta\in\Upsilon} (\widetilde A_{\xi,\zeta} - A_{\xi,\zeta})
k_m(\cdot,\zeta)- \textstyle{\sum_{\zeta \not\in \Upsilon}} A_{\xi,\zeta} k_m(\cdot,\zeta),
\]
yields (\ref{error_tilde_chi}). Next, from (\ref{lagrange_decay}) we have
\begin{eqnarray*}
|\chi_{\xi}| &\le& C \exp\left(-\nu\frac{\d(x,\xi)}{h}\right)\\
& \le& C \exp\left(-K\nu\frac{\d(x,\xi)}{Kh}\right)\\
& \le& 
C\bigg(1+\frac{\d(x,\xi)}{Kh}\bigg)^{-K\nu}.
\end{eqnarray*}
Combining this with (\ref{error_tilde_chi}), using $J=K\nu -2m-\mu >0$ and manipulating, we arrive at (\ref{tilde_chi_bnd}).

It remains to demonstrate the $L_p$ stability of $(\tilde{\chi}_{\xi})$ for $1\le p\le \infty$. When $p=1$, we 
consider  a sequence $\bfa = (a_{\xi})_{\xi\in\Xi}\in \ell_{1}(\Xi)$. Let $s := \sum a_{\xi} \chi_\xi$ and $\tilde{s} := \sum a_{\xi} \widetilde{\chi}_{\xi}$. From H{\" o}lder's inequality and (\ref{error_tilde_chi}), we have 
$\|\tilde{s}  - s \|_{L_1(\sphere)} \le C \|\bfa\|_{\ell_1(\Xi)} h^{J}$ and
\[
\|\tilde{s}  - s \|_{L_\infty(\sphere)} 
\le 
C \|\bfa\|_{\ell_\infty(\Xi)}
\underbrace{ \textstyle{\sum_{\xi \in \Xi}}|\widetilde \chi_\xi(x) - \chi_\xi(x)|}_{\le N\max_\xi \| \widetilde \chi_\xi - \chi_\xi\|_{L_\infty(\sphere)}}
\le C\|\bfa\|_{\ell_\infty(\Xi)}  h^{J}q^{-2}.
\]
Interpolating between these two inequalities -- i.e.,  interpolating
the finite rank operator $\bfa \mapsto (s - \tilde{s})$ --
gives
\[
\begin{aligned}
\|s - \tilde{s}\|_{L_p(\sphere)} &\le C   h^{J} q^{ -2(1-1/p) }\|\bfa\|_{\ell_p(\Xi)} \\
&\le Ch^{J-2}q^{2/p}\|\bfa\|_{\ell_p(\Xi)}. \quad (q^{-2}\sim h^{-2}).
\end{aligned}
\]
After some manipulation,  this bound and (\ref{p_stability}) imply that
\[
c_1q^{2/p}\|\bfa\|_{\ell_p(\Xi)} (1- C h^{J-2} )
\le \|\tilde{s}\|_{L_p(\sphere)}
\le
c_2q^{2/p}\|a\|_{\ell_p(\Xi)}(1+ C h^{J-2}).
\]
Choosing $h$ so that $C h^{J-2} \le 1/2$ and letting $C_1=c_1/2$ and
$C_2=3c_2/2$, we obtain (\ref{p_stable_J}).
\end{proof}

\begin{remark}\label{general_manifold_loc}
{\rm When there are no constraint conditions on the coefficients, this
result holds for any of the strictly positive definite kernels
mentioned in Remark~\ref{general_manifold_main}}. In particular it
holds for Sobolev splines on a compact $C^\infty$ Riemannian manifold.
\end{remark}

% In the sections below, we will apply this result to various
% situations involving compact, two-point homogeneous spaces,
% specifically spheres and projective spaces, and various
% conditionally positive definite polyharmonic kernels. Ultimately,
% this will give us a \emph{practical result} concerning local
% Lagrange functions. The next corollary, while theoretical, applies
% to the class of positive definite Sobolev kernels $\kappa_m$,
% described in section~\ref{kernels_considered}, for any compact,
% $C^\infty$ Riemannian manifold.
%
%\begin{corollary}
%  Let $\kappa_m$ be a positive definite Sobolev kernel associate with
%  a compact, $C^\infty$ Riemannian manifold $\M$. Then, since
%  $\Pi_\J=\{0\}$, all of the results from
%  Proposition~\ref{chi_tilde_properties} hold with $\mu_\J=0$.
%\end{corollary}
%
%\begin{proof}
%  By Remark~\ref{lagrange_sobolev_kernels}, the Sobolev kernels
%  $\kappa_m$ satisfy the requisite properties for
%  Proposition~\ref{chi_tilde_properties} to hold.
%\end{proof}

\subsection{Norm of the inverse Gram matrix}\label{spheres_proj_lagrange}

We now demonstrate that the conditions on $G_{\Upsilon }^{-1}$ in
Proposition \ref{chi_tilde_properties} are automatically satisfied. We
will state and prove the results below for caps on $\sph^d$, rather
than just $\sphere$. Also, It is more convenient to use with $\Pi_L$
rather than $\Pi_{m-1}$, because $m$ is notationally tied to the
polyharmonic kernels $k_m$ as well as the spherical harmonics on
$\sphere$. That said, we begin with the lemma below.

\begin{lemma} \label{quad_cap_lem} Suppose that $S_r :=B(\xi,r)\subset
  \sph^d$ is a cap of fixed radius $r<\pi$, and that $\scrc \subset
  S_r$ is finite and has mesh norm $h_\scrc :=h_{S_r,\scrc}$. In
  addition, let $L\ge 0$ be a fixed integer and take $\Pi_L$ to be
  the space of all spherical harmonics of degree at most $L$. Then,
  there exists a constant $c_0:=c_0(d,L)>0$ such that when $h_\scrc
  \le c_0r$ we have
\begin{equation}
\label{quad_cap_ineq}
\sum_{\zeta\in \scrc} |\varphi(\zeta)|^2 \ge \mu(S_r)^{-1}\int_{S_r} |
\varphi(x)|^2 d\mu(x), \ \text{for all }\varphi\in \Pi_L.
\end{equation}
Moreover, the set $\scrc$ is unisolvent for $\Pi_L$. Finally, for
every basis for $\Pi_L$ the corresponding Gram matrices $G_\scrc$ and
$G_{S_r}$, relative to the inner products on $\ell^2(\scrc)$ and
$S_r$, respectively, satisfy
\begin{equation}
\label{gram_comparison}
\|G_\scrc^{-1}\|_2 \le \mu(S_r) \|G_{S_r}^{-1}\|_2.  
\end{equation}
\end{lemma}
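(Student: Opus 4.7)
My plan is to establish the pointwise evaluation inequality (\ref{quad_cap_ineq}) first, as the unisolvency of $\scrc$ and the Gram matrix bound (\ref{gram_comparison}) are easy consequences. The central tool I would invoke is a Marcinkiewicz--Zygmund quadrature formula on the cap $S_r$ valid for $\Pi_{2L}$: provided $h_\scrc / r$ is below a threshold depending only on $d$ and $L$, there exist \emph{positive} weights $\{w_\zeta\}_{\zeta\in\scrc}$ with $w_\zeta \le C(d,L)\, h_\scrc^d$ such that
\[
\int_{S_r} p(x)\, d\mu(x) = \sum_{\zeta\in\scrc} w_\zeta\, p(\zeta), \qquad p \in \Pi_{2L}.
\]
Such a cap-quadrature can be built by the standard covering-and-correction argument, in which one begins with a Voronoi partition of $S_r$ by the points of $\scrc$ and corrects the obvious Riemann sum using a local Bernstein--Markov estimate for spherical harmonics on a cap. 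I expect this construction to be the technically most delicate step; once it is accepted as a black box from the Marcinkiewicz--Zygmund literature, the rest of the argument is routine.

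Once the quadrature is available, I would apply it to the polynomial $|\varphi|^2 \in \Pi_{2L}$ for $\varphi \in \Pi_L$, obtaining
\[
\int_{S_r} |\varphi(x)|^2\, d\mu(x) = \sum_{\zeta\in\scrc} w_\zeta\, |\varphi(\zeta)|^2 \le \Big(\max_\zeta w_\zeta\Big) \sum_{\zeta\in\scrc} |\varphi(\zeta)|^2.
\]
Since $\max_\zeta w_\zeta \le C(d,L)\, h_\scrc^d$ and $h_\scrc \le c_0 r$, while $\mu(S_r)$ is bounded below by a constant multiple of $r^d$ (at worst, by a constant depending only on the fixed radius $r<\pi$), shrinking $c_0$ if necessary (still only in terms of $d$ and $L$) forces $\max_\zeta w_\zeta \le \mu(S_r)$, which is exactly (\ref{quad_cap_ineq}).

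For the remaining assertions, unisolvency of $\scrc$ is immediate: any $\varphi\in\Pi_L$ vanishing on $\scrc$ has $\int_{S_r}|\varphi|^2\, d\mu = 0$ by (\ref{quad_cap_ineq}), hence $\varphi \equiv 0$ on the open set $S_r$, and then $\varphi \equiv 0$ globally by analyticity (equivalently, unique continuation for the eigenvalue equation of $-\Delta$ on $\sph^d$). For the Gram matrix comparison, I would fix any basis $\{\psi_j\}_{j=1}^{\dim\Pi_L}$ for $\Pi_L$, set $\varphi = \sum_j c_j \psi_j$, and observe that the two sides of (\ref{quad_cap_ineq}) read $\bfc^{\ast} G_\scrc \bfc$ and $\mu(S_r)^{-1}\bfc^{\ast} G_{S_r} \bfc$; the inequality is thus precisely the statement that $G_\scrc - \mu(S_r)^{-1} G_{S_r}$ is positive semidefinite. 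Since $G_{S_r}$ is invertible on $\Pi_L$ (the $\psi_j$ are linearly independent on $S_r$ by the same unique-continuation argument), inverting in the positive-definite order yields $G_\scrc^{-1} \le \mu(S_r)\, G_{S_r}^{-1}$, and taking operator norms gives (\ref{gram_comparison}).
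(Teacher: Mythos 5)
Your proof is correct and follows essentially the same route as the paper: both rely on a positive-weight quadrature formula on the cap $S_r$ exact on $\Pi_{2L}$ (the paper cites \cite[Theorem~2.1]{mhaskar-2004-1} for this), apply it to $|\varphi|^2$, and then deduce unisolvency and the Gram-matrix comparison (\ref{gram_comparison}) exactly as you do. The one small difference is in how the weights are bounded: you invoke the quasi-uniform weight estimate $w_\zeta \lesssim h_\scrc^d$ and shrink $c_0$ so that $\max_\zeta w_\zeta \le \mu(S_r)$, whereas the paper notes the simpler fact that nonnegativity plus exactness on constants already gives $w_\zeta \le \sum_\zeta w_\zeta = \mu(S_r)$ for each $\zeta$, requiring nothing beyond the bare existence of the nonnegative-weight quadrature.
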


\begin{proof}
  Since since $\varphi(x)$ and $\overline{\varphi}(x)$ are spherical
  harmonics in $\Pi_L$, their product is a spherical harmonic of
  degree at most $2L$.  Thus, applying the nonnegative-weight
  quadrature formula in \cite[Theorem~2.1]{mhaskar-2004-1} to
  spherical harmonics of order $2L$ yields
\[
\sum_{\zeta\in \scrc} w_\zeta |\varphi(\zeta)|^2 = \int_{S_r} |\varphi(x)|^2 d\mu(x), 
\]
Since $0\le w_\zeta \le \sum_{\zeta\in \scrc} w_\zeta = \mu(S_r)$, we
have the inequality $\sum_{\zeta\in \scrc} w_\zeta |\varphi(\zeta)|^2\le \mu(S_r)
\sum_{\zeta\in \calc} |\varphi(\zeta)|^2$. The inequality
(\ref{quad_cap_ineq}) follows immediately from the quadrature
formula. To prove that $\calc$ is unisolvent, suppose that $\varphi\in
\Pi_L$ vanishes on $\calc$. By (\ref{quad_cap_ineq}), we have that
$\int_{S_r} |\varphi(x)|^2 d\mu(x)=0$.  Since $\varphi$ is in $\Pi_L$,
it is a polynomial in sines and cosines of the angles used in the
standard parameterization of $\sph^d$, with $\xi$ being the ``north''
pole. As a consequence, it is continuous on $S_r$ and, because
$\int_{S_r} |\varphi(x)|^2 d\mu(x)=0$, it is identically $0$ on
$S_r$. Finally, as a function of the angular variables in the complex
plane, it is analytic, entire in fact, and can be expanded in a power
series in these variables. The fact that it vanishes identically for
\emph{real} values of the angular variables is enough to show that the
coefficients in the series are all zero. Hence, $\varphi \equiv 0$ on
$\sph^d$ and $\scrc$ is unisolvent for $\Pi_L$. To establish
(\ref{gram_comparison}), note that (\ref{quad_cap_ineq}) implies that
$G_\scrc - \mu(S_r)^{-1} G_{S_r}$ is positive semi definite. From the
Courant-Fischer theorem, the lowest eigenvalue of $G_\scrc$ is greater
than that of $ \mu(S_r)^{-1} G_{S_r}$. This inequality then yields
(\ref{gram_comparison}), since these eigenvalues are
$\|G_\scrc^{-1}\|_2^{-1}$ and $\mu(S_r) \|G_{S_r}^{-1}\|_2^{-1}$,
respectively.
\end{proof}

We now need to compute the Gram matrix for the \emph{canonical basis}
of $\Pi_L$. This basis is described in \cite[Chapter IX, \S
3.6]{vilenkin_book_1968} and consists of spherical harmonics. Let
$\ell,k_1,\ldots,k_{d-1}$ be integers satisfying $\ell\ge k_1\ge
k_2\ge\cdots \ge k_{d-1}\ge 0$, and take $K:=(k_1,\ldots,\pm
k_{d-1})$. A spherical harmonic of degree $\ell$
\cite[p.~466]{vilenkin_book_1968}) will be denoted by
$Y^\ell_K(\theta_1,\ldots,\theta_{d})$. The angles are the usual ones
from spherical coordinates in $\RR^{d+1}$
(cf. \cite[p.~435]{vilenkin_book_1968}). The basis for $\Pi_L$ is then
the set of all $Y^\ell_K$, $0\le \ell\le L$. The entries in the Gram
matrix are $[G_{S_r}]_{(\ell, K),(\ell',K')}=\langle
Y^\ell_K,Y^{\ell'}_{K'}\rangle_{S_r}$. Following the argument in
\cite[Chapter IX, \S 3.6]{vilenkin_book_1968}, one may show that
\begin{multline}
\label{gram_entries}
\langle Y^\ell_K,Y^{\ell'}_{K'}\rangle_{S_r} \\
=
B_{\ell,K}B_{\ell',K}\delta_{K,K'} 
\int_0^r C^{\frac{d-1}{2}+k_1}_{\ell-k_1}(\cos \theta)
C^{\frac{d-1}{2}+k_1}_
{\ell'-k_1}(\cos \theta)\sin^{2k_1+d-1} \theta d\theta, 
\end{multline}
where $C^s_n(t)$ is the Gegenbauer polynomial of degree $n$ and type
$s$, and $B_{\ell,k_1}$ is a normalization factor. In the case where
$d=2$ and $L=1$, $G_{S_r}$ is $4\times 4$ and has six non-zero
entries,
\begin{equation*}
\begin{array}{rrcl}
 & G_{(0,0),(0,0)} &=& \frac12 (1-\cos r), \\
&G_{(0,0),(1,0)} =
  G_{(1,0),(0,0)}&=&\frac{\sqrt{3}}{4}(1- \cos r)(1+\cos r), \\
  &G_{(1,0),(1,0)}&=&\frac12 (1-\cos r)(1+\cos r +\cos^2r),\\
   & G_{(1,\pm 1),(1,\pm 1)}&=&\frac14 (1-\cos r)^2(2 + \cos r ) .
\end{array}
\end{equation*}
Since $\mu(S_r) = 2\pi (1-\cos r)$, the formulas for the entries above
imply that $G_{S_r}/\mu(S_r)$ is a polynomial in $\cos r$. In fact, a
straightforward calculation shows that the minimum eigenvalue of this
matrix is $r^4/(256\pi) + \calo(r^6)$. Lemma~\ref{quad_cap_lem} then
implies that $\|G_\calc^{-1}\|_2 \le \mu(S_r)\|
G_{S_r}^{-1}\|_2=256\pi r^{-4}+\calo(r^{-2})$. A less precise, but
similar result, holds in the general case.

\begin{lemma}\label{norm_inverse_lower_bnd}
  Under the assumptions of Lemma~\ref{quad_cap_lem}, for general $L\ge
  0$, $d\ge 2$ and $r$ sufficiently small, there is an integer
  $\iota=\iota (L,d)\ge L$ and a constant $C=C(L,d)>0$ such that
  $\|G_\calc^{-1}\|_2 \le Cr^{-2\iota}$. For $L=1$ and $d=2$, we may
  take $\iota = 2$.
\end{lemma}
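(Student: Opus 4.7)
The plan is to reduce to a single block via \eqref{gram_entries} and then analyze the small-$r$ asymptotics of that block using a convenient change of basis. By \eqref{gram_comparison} it suffices to show $\|G_{S_r}^{-1}\|_2 \le C r^{-(4L+d)}$; combined with $\mu(S_r) = \calo(r^d)$ this yields $\|G_\calc^{-1}\|_2 \le C r^{-4L}$, so $\iota = 2L$ works, which in particular gives $\iota = 2$ when $L=1$, $d=2$, and automatically satisfies $\iota \ge L$.

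The Kronecker factor $\delta_{K,K'}$ in \eqref{gram_entries} makes $G_{S_r}$ block diagonal, indexed by $K = (k_1,\pm k_2,\ldots,\pm k_{d-1})$. The normalizations $B_{\ell,K}B_{\ell',K}$ are positive and bounded above and below by constants depending only on $L$ and $d$, so they do not affect the $r$-scaling of the minimum eigenvalue; it suffices to estimate the smallest eigenvalue of
\[
M_K(r)_{\ell,\ell'} = \int_0^r C^{s}_{\ell-k_1}(\cos\theta)\,C^{s}_{\ell'-k_1}(\cos\theta)\,\sin^{2k_1+d-1}\theta\,d\theta,\qquad k_1\le \ell,\ell'\le L,
\]
with $s=(d-1)/2+k_1$. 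Since both $\{C^s_n(\cos\theta)\}_{n=0}^{L-k_1}$ and $\{(1-\cos\theta)^j\}_{j=0}^{L-k_1}$ are bases for the polynomials of degree $\le L-k_1$ in $\cos\theta$, there is an $r$-independent invertible transition matrix $T$ with $M_K(r) = T\,\widetilde M_K(r)\,T^{T}$ and $\widetilde M_K(r)_{j,j'} = \int_0^r (1-\cos\theta)^{j+j'}\sin^{2k_1+d-1}\theta\,d\theta$; hence $\lambda_{\min}(M_K(r)) \ge \sigma_{\min}(T)^2\,\lambda_{\min}(\widetilde M_K(r))$.

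The advantage of the new basis is that $1-\cos\theta = \theta^2/2+\calo(\theta^4)$ and $\sin\theta = \theta+\calo(\theta^3)$ give the explicit leading-order expansion
\[
\widetilde M_K(r)_{j,j'} = \frac{r^{2(j+j')+2k_1+d}}{2^{j+j'}\bigl(2(j+j')+2k_1+d\bigr)}\bigl(1+\calo(r^2)\bigr),
\]
which factors as $\widetilde M_K(r) = r^{2k_1+d}\,D(r)(H+\calo(r^2))D(r)$ with $D(r)=\operatorname{diag}\bigl((r^2/2)^j\bigr)_{j=0}^{L-k_1}$ and $H_{j,j'}=1/(2(j+j')+2k_1+d)$. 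The matrix $H$ is, up to a relabeling, the Gram matrix in $L^2([0,1])$ of the linearly independent monomials $u^{2j+k_1+(d-1)/2}$, $j=0,\ldots,L-k_1$, and is therefore strictly positive definite. The elementary bound $\lambda_{\min}(DAD)\ge d_{\min}^2\,\lambda_{\min}(A)$ applied with $d_{\min}=(r^2/2)^{L-k_1}$ then gives
\[
\lambda_{\min}(\widetilde M_K(r)) \ge c\,r^{2k_1+d}\cdot r^{4(L-k_1)} = c\,r^{4L-2k_1+d}.
\]
The smallest lower bound is realized at $k_1=0$ (attained e.g.\ by $K=(0,\ldots,0)$) and yields $\lambda_{\min}(G_{S_r})\ge c\,r^{4L+d}$, hence $\|G_{S_r}^{-1}\|_2 \le C r^{-(4L+d)}$ as required.

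The main technical point is to ensure the $\calo(r^2)$ perturbation in the factorization is harmless uniformly across the finitely many blocks. Because $k_1\in\{0,1,\ldots,L\}$ takes only finitely many values and the unperturbed $H=H(k_1,L,d)$ has a strictly positive smallest eigenvalue for each, choosing $r$ small enough (depending only on $L$ and $d$) makes $H+\calo(r^2)$ positive definite with a uniform lower bound on $\lambda_{\min}$. Everything else is routine Taylor expansion and the two elementary linear-algebra inequalities already used above.
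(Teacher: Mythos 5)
Your proof is correct, but it takes a genuinely different route from the paper. The paper's proof establishes existence of $\iota$ via Rellich's theorem on the analyticity of eigenvalues of the self-adjoint family $\widetilde G(r)=G_{S_r}/\mu(S_r)$: since the entries are even entire functions of $r$ and the eigenvalues are analytic, nonnegative, and not identically zero, the smallest one must have the form $r^{2\iota}(a_0+\calo(r^2))$ with $a_0>0$; the bound $\iota\ge L$ is then read off from the diagonal entry with $\ell=\ell'=k_1=L$. The paper does not determine $\iota$ in general, only for $L=1,\,d=2$, where a direct $4\times 4$ computation gives $\iota=2$. Your argument replaces the soft perturbation-theoretic step with a concrete one: you exploit the block structure in $K$, pass from the Gegenbauer basis to the monomials $(1-\cos\theta)^j$ via an $r$-independent invertible transition matrix, extract the exact leading-order $r$-dependence of each block as $r^{2k_1+d}D(r)\bigl(H+\calo(r^2)\bigr)D(r)$ with $H$ a Hilbert-type Gram matrix on $L^2([0,1])$, and apply the elementary inequalities $\lambda_{\min}(T\widetilde M T^{T})\ge \sigma_{\min}(T)^2\lambda_{\min}(\widetilde M)$ and $\lambda_{\min}(DAD)\ge d_{\min}^2\lambda_{\min}(A)$. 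This yields the explicit, uniform value $\iota=2L$ for all $d$, which is consistent with the paper's $\iota=2$ at $L=1$, $d=2$ and automatically satisfies $\iota\ge L$. What your approach buys is a concrete exponent (useful downstream, where $\mu=\iota$ feeds into the required size of $K$), at the cost of some bookkeeping; what the paper's approach buys is brevity and generality, since analyticity plus a single diagonal-entry estimate settles existence without tracking the block structure. One minor remark: you prove a \emph{lower bound} $\lambda_{\min}(G_{S_r})\gtrsim r^{4L+d}$, which is all the lemma needs; showing $\iota=2L$ is the \emph{exact} exponent would require a matching upper bound, but that is not required here.
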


\begin{proof}
  From the expression in (\ref{gram_entries}) for the entries in
  $G_{S_r}$, we see that each of them is entire in $r$ and has a zero
  of order $d$ or greater at $r=0$. In addition, $\mu(S_r)$ is also
  entire in $r$ and has a zero of order $d$. It follows that the
  matrix $\widetilde G(r) = G_{S_r}/\mu(S_r)$ is entire, even in
  $r$. and for real $r$, it is real, self adjoint and positive semi
  definite. (In fact, for $d$ even, it is a polynomial in $\cos r$.)
  In addition, the $2\times 2$ block in $\widetilde G(0)$
  corresponding to $k_1=0$, $\ell=0,1$, is rank 1 and therefore has 0
  as an eigenvalue; consequently, $\widetilde G(0)$ also has 0 as an
  eigenvalue -- it's lowest, in fact. As Rellich \cite[pg.\
  91]{rellich-book-1969} shows, the eigenvalues of $\widetilde G(r)$
  are analytic functions of $r$. For $r>0$, these eigenvalues are
  proportional to those of the Gram matrix $G_{S_r}$ and therefore
  must be positive. None of these eigenvalues are identically $0$. In
  particular, the eigenvalues splitting off from the $0$ eigenvalue of
  $\widetilde G(0)$ are not identically $0$. As functions of $r$ they
  thus have a zero of finite order at $r=0$; the order is an even
  integer because $\widetilde G(r)$ is even in $r$. The smallest
  eigenvalue then behaves like $\lambda_{min}(r) = r^{2\iota}(a_0+
  \calo(r^2))$, where $a_0>0$, $\iota>0$ is an integer, and $r$ is
  sufficiently small. Furthermore, from (\ref{gram_entries}) we see
  that the diagonal entry, with $\ell=\ell'=k_1=L$, is $\calo
  (r^{2L})$.  since this bounds the minimum eigenvalue from above, we
  must have $2\iota \ge 2L$, so $\iota \ge L$. The result then follows
  from Lemma~\ref{quad_cap_lem} and the observation that
  $\mu(S_r)\|G_{S_r}^{-1}\|_2 = \lambda_{min}^{-1}$. The calculation
  for $L=1$ and $d=2$ was done above.
\end{proof}

\subsection{Local Lagrange Bases} \label{loc_lag_bases}

We now turn to the \emph{local} Lagrange basis. Recall that the
function $\achi_\xi \in S_{m}(\Xi)$, with the kernel representation
\begin{equation}\label{local_lagrange}
\check \chi_\xi = \sum_{\zeta\in\Upsilon} \check A_{\xi,\zeta}
k_m(\cdot,\zeta)+\sum_{j=1}^{m^2}\check b_j \phi_j \in S_m(\Xi),
\end{equation}
is a \emph{local} Lagrange function centered at $\xi$ if it satisfies
$\check \chi_\xi|_\Upsilon = \bfe_\xi$, where $\bfe_\xi(\zeta) =
\delta_{\xi,\zeta}$; that is, $\bfe_\xi$ is the vector
$(1,0,\ldots,0)^T$. Since $\check \chi_\xi \in S_m(\Xi)$, the vector
$\check A_\xi = (\check A_{\xi,\zeta})_{\zeta\in \Upsilon}$ is in the
constraint space. This vector and the coefficients $\check b_j$ then
satisfy $\check \chi_\xi|_{\Upsilon} = \bfe_\xi$, Of course, the
(full) Lagrange function $\chi_\xi=\sum_{\zeta\in\Xi} A_{\xi,\zeta}
\kappa(\cdot,\zeta)+\sum_{j=1}^{m^2} b_j \phi_j$ restricted to
$\Upsilon$ also satisfies $\chi_\xi|_{\Upsilon} =
\bfe_\xi$. Consequently, $\mathfrak D_\xi :=\check \chi_\xi -\chi_\xi$
satisfies $\mathfrak D_\xi|_{\Upsilon} = \bf0$. We can rewrite this
difference as $\mathfrak D_\xi = \check \chi_\xi -\widetilde \chi_\xi
+ \widetilde \chi_\xi -\chi_\xi$ (with $\widetilde \chi_\xi $ the
truncated basis function introduced in the last section).  It follows
that
\[
\mathfrak D_\xi = \underbrace{\sum_{\zeta \in \Upsilon}
  (\underbrace{\check A_\zeta - \widetilde
    A_\zeta}_{\displaystyle{\alpha_\zeta}} ) \kappa(\cdot,\zeta) +
  \sum_{j=1}^{\# \J}(\underbrace{\check b_j -
    b_j}_{\displaystyle{\beta_j}} ) \phi_j}_{\check \chi_\xi -
  \widetilde \chi_\xi} + \widetilde \chi_\xi - \chi_\xi .
\]
Evaluating this on $\Upsilon$ then gives the system
$\K_{\Upsilon}\alpha + \Phi \beta +(\widetilde \chi_\xi -
\chi_\xi)|_{\Upsilon} = \bf0$.
% Since both $\check A$ and $\widetilde A$ satisfy the constraint
% equations, $\Phi^\ast\check A = \bf0$ and $\Phi^\ast \widetilde
% A=\bf0$, we see that
By linearity, it is clear that $\Phi^\ast \alpha=\bf0$. Finally,
letting $\bfy= (\chi_\xi - \widetilde \chi_\xi)|_{\Upsilon}$, we
arrive at the system,
\begin{equation}
\label{local_lag__matrix}
\begin{pmatrix}
\K_{\Upsilon} & \Phi\\
\Phi^* &\bf0
\end{pmatrix}
\begin{pmatrix}
\alpha \\ \beta\end{pmatrix}
=
\begin{pmatrix}
\bfy\\\bfzero\end{pmatrix}.
\end{equation}
Proposition~\ref{inverse_norm_est} applies to
(\ref{local_lag__matrix}), with $\Xi$ replaced by $\Upsilon$; thus,
noting that $\|\bfy\|_{\ell_\infty(\Upsilon)}\le \| \widetilde
\chi_\xi - \chi_\xi\|_\infty$, and writing $\J = (1,\dots,m^2)$, we
see that
\begin{gather*}
  \|\alpha\|_{\ell_2(\Upsilon)} \le \vartheta^{-1}\sqrt{\# \Upsilon}
  \|\widetilde \chi_\xi -
  \chi_\xi\|_\infty \\
  \|\beta\|_{\ell_2(\J)}\le 2\|k_m\|_\infty \| G_\Upsilon^{-1}\|^{1/2}
  \vartheta^{-1}(\# \Upsilon)^{3/2} \|\widetilde \chi_\xi -
  \chi_\xi\|_\infty
\end{gather*}
From this we obtain these inequalities:
\begin{align*}
  \|\check \chi_\xi - \widetilde \chi_\xi\|_\infty &\le \|k_m\|_\infty
  \sqrt{\# \Upsilon} \|\alpha\|_{\ell_2(\Upsilon)}
  + m C_m \| \beta \|_{\ell_2(\J)}, \ C_m=\max_{j\le m^2} \|\phi_j\|_\infty \\
  &\le \|k_m\|_\infty \# \Upsilon \vartheta^{-1} \bigg(1 + 2m C_m
  \sqrt{ \# \Upsilon \| G_\Upsilon^{-1}\|} \bigg)\|\widetilde \chi_\xi
  - \chi_\xi\|_\infty
\end{align*}
% Assuming the conditions of Proposition~\ref{chi_tilde_properties},
% if $K>(2m + 2\mu)/\nu$, we have
%\begin{equation}
%  \|\check \chi_\xi - \widetilde \chi_\xi\|_\infty \le \|\check
%  \chi_\xi -  \chi_\xi\|_\infty \le C\frac{|\log h
%    |^2}{\vartheta}\left( 
%    h^{-\mu}\right) h^{K\nu - 2m-\mu}\le C\frac{|\log h
%    |^2}{\vartheta} h^{K\nu - 2m-2 \mu}  . \label{loc_lag_inf_theta}
% \|\check \chi_\xi - \chi_\xi\|_\infty \le C\frac{|\log h
% |^d}{\theta}\left(1+ m h^{-\mu_\J}\right)^2 h^{K\nu - 2m}\le
% C\frac{|\log h |^d}{\theta} h^{K\nu - 2m-2\mu}
% . \label{loc_lag_inf_theta}
%\end{equation}
Moreover, using $\|\check \chi_\xi - \chi_\xi\|_\infty \le \|\check
\chi_\xi - \widetilde \chi_\xi\|_\infty + \| \chi_\xi - \widetilde
\chi_\xi\|_\infty$, we see that
\begin{equation}
\label{dist_local_lag_to_lag}
\|\check \chi_\xi -  \chi_\xi\|_\infty \le 2\| k_m\|_\infty \#
\Upsilon 
\vartheta^{-1} \bigg(1 + 2mC_m\sqrt{ \# \Upsilon \| 
  G_\Upsilon^{-1}\|} \bigg)\|\widetilde \chi_\xi - \chi_\xi\|_\infty.
\end{equation}
Finally, from Proposition~\ref{chi_tilde_properties}, if $K>(2m +
2\mu)/\nu$, it is easy to see that this holds:
\begin{equation}
  \|\check \chi_\xi -  \chi_\xi\|_\infty \le C\frac{|\log h |^2}
  {\vartheta} \left(1+ m  h^{-\mu}\right) h^{K\nu - 2m-\mu}\le 
  C\frac{|\log h |^2}{\vartheta} h^{K\nu - 2m-2\mu}  . \label{loc_lag_inf_theta}
  % \|\check \chi_\xi - \chi_\xi\|_\infty \le C\frac{|\log h
  % |^d}{\theta}\left(1+ m h^{-\mu_\J}\right)^2 h^{K\nu - 2m}\le
  % C\frac{|\log h |^d}{\theta} h^{K\nu - 2m-2\mu}
  % . \label{loc_lag_inf_theta}
\end{equation}

To proceed further, we need to estimate $\vartheta$. Such estimates
are known for surface splines in the Euclidean case \cite[\S6]{NW92}.
% Fortunately, restricting these kernels from $\reals^3$ to $\sphere$
% doesn't change the $q$ behavior of these estimates.
Simply repeating the proofs of \cite[Corollary~2.2]{NW92} and
\cite[Theorem~2.4]{NW92} for a set of points in $\RR^{3}$ restricted
to $\sphere$ yields the desired estimate.  For the collocation matrix
associated with $k_m$ and $\Xi$, we have
\begin{equation}
\label{theta_bound}
\vartheta \ge Cq^{2m-2},
\end{equation}
where $C$ depends only on $m$.\footnote{ For any $d$-dimensional
  sphere or projective space and any conditionally positive definite
  polyharmonic kernel with associated polynomial operator $\L_m =
  Q(-\Delta)$, where $Q$ is a polynomial of degree $m$, the
  coefficients in the expansion for $k_m$ are given by $\tilde{k}(j) =
  Q(\lambda_j)^{-1},\ j\not\in \J$. %(\ref{polyharmonic_coefs}).
  For large $\lambda_j$, all of these have the asymptotic behavior
  $\lambda_j^{-m}$, which is the same as that of the coefficients for
  the $m$-$d$ thin-plate spline. This implies that the matrix $P^\perp
  \K_\Upsilon P^\perp$ in Proposition~\ref{inverse_norm_est} (here,
  $\Xi\to \Upsilon$) will have a lowest eigenvalue value that is, up
  to a constant multiple, dependent only on $m$ and $d$. Consequently,
  the bound $\vartheta \ge Cq^{2m-d}$ holds for all $k_m$ associated
  with $\L_m$ in dimension $d$.}  Thus, for $k_m$, we have $\|\check
\chi_\xi - \chi_\xi\|_\infty \le C h^{K\nu - 4m +2 - 2\mu}$, where the
constant $K$ has to be increased slightly to absorb $|\log h|^2$. With
this in mind we have the following result, whose proof, being similar
to Proposition ~\ref{chi_tilde_properties}, we omit.

\begin{theorem}\label{loc_lag_properties}
  Let the notation and assumptions of Theorem~\ref{main} hold. Suppose
  that $K>0$ is chosen so that $K>\frac{4m - 2+2\mu}{\nu}$ and, for
  each $\xi\in \Xi$, $\Upsilon(\xi) := \Xi\cap B(\xi,Kh|\log h |)$. If
  $\check \chi_\xi$ is a local Lagrange function for $\Upsilon(\xi)$
  centered at $\xi$, then set $\{\check \chi_\xi\}_{\xi\in \Xi}$ is a
  basis for $S_m(\Xi)$. Moreover, with $J:=K\nu - 4m +2 -2\mu$, we
  have
\begin{gather}
\| \check \chi_\xi - \chi_\xi\|_\infty \le C\ h^{J}, \label{error_check_chi} \\
|\check \chi_{\xi}(x)| \le C\big(1+\d(x,\xi)/h\big)^{-J}.\label{check_chi_bnd}
\end{gather}
Furthermore, when $J>d$, the set $\{\check \chi_\xi\}$ is $L_p$
stable: there are $C_1,C_2>0$ for which
\begin{equation}
\label{p_stable_check_chi_J}
C_1 q^{2/p} \|\bfa\|_{\ell_p(\Xi)}
\le 
\big\|\textstyle{\sum_{\xi\in\Xi}} a_{\xi} \check \chi_{\xi}\big\|_{L_p(\sphere)}
\le 
C_2 q^{2/p} \|\bfa\|_{\ell_p(\Xi)}.
\end{equation}
\end{theorem}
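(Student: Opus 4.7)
The plan is to follow the blueprint laid out in the two paragraphs preceding the theorem statement and to imitate the structure of the proof of Proposition~\ref{chi_tilde_properties}. The starting point is the bound \eqref{loc_lag_inf_theta}, which expresses $\|\check \chi_\xi - \chi_\xi\|_\infty$ in terms of the error estimate for the truncated Lagrange function $\widetilde \chi_\xi$ and the geometric quantities $\vartheta$, $\|G_\Upsilon^{-1}\|_2$, and $\#\Upsilon$. The first step is to feed \eqref{theta_bound}, namely $\vartheta \ge Cq^{2m-2}$, into \eqref{loc_lag_inf_theta}. Since the mesh ratio is fixed, $q \sim h$, and this converts the $\vartheta^{-1}$ factor into an additional $h^{-(2m-2)}$. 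Combined with the $h^{K\nu-2m-2\mu}$ already present, this yields
\[
\|\check \chi_\xi - \chi_\xi\|_\infty \le C\,|\log h|^2\, h^{K\nu - 4m + 2 - 2\mu},
\]
and the extra $|\log h|^2$ may be absorbed by enlarging $K$ slightly, giving \eqref{error_check_chi}. To apply \eqref{loc_lag_inf_theta} legitimately we must verify its hypotheses on $\Upsilon(\xi)$: Lemma~\ref{quad_cap_lem} applies since the mesh norm of $\Upsilon(\xi)$ in $B(\xi,Kh|\log h|)$ is at most $h \ll c_0 K h|\log h|$, and then Lemma~\ref{norm_inverse_lower_bnd} delivers $\|G_\Upsilon^{-1}\|_2 \le C(Kh|\log h|)^{-2\iota}$, which is of the required order $\calo(|\log h|^{-2}h^{-2\mu})$ with $\mu=\iota$ (with logarithmic factors to spare).

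For the pointwise estimate \eqref{check_chi_bnd}, I would combine \eqref{lagrange_decay} with \eqref{error_check_chi} exactly as in the last display of the proof of Proposition~\ref{chi_tilde_properties}. The exponential bound $|\chi_\xi(x)| \le C\exp(-\nu\,\d(x,\xi)/h)$ is first rewritten as $C(1 + \d(x,\xi)/(Kh))^{-K\nu}$, and then the triangle inequality $|\check \chi_\xi(x)| \le |\chi_\xi(x)| + \|\check \chi_\xi - \chi_\xi\|_\infty$ together with the identity $K\nu = J + 4m - 2 + 2\mu \ge J$ yields the polynomial decay with exponent $J$ after minor algebraic manipulation.

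The basis property follows by a standard perturbation argument: the linear map $T$ on $S_m(\Xi)$ sending $\chi_\xi \mapsto \check\chi_\xi$ differs from the identity by an operator whose matrix entries (in the Lagrange basis) are controlled by $\|\check \chi_\xi - \chi_\xi\|_\infty \le C h^J$, and $J>0$ for $h$ small enough to make $T$ invertible. The $L_p$ stability \eqref{p_stable_check_chi_J} is then obtained by exactly repeating the interpolation argument at the end of the proof of Proposition~\ref{chi_tilde_properties}: writing $s = \sum a_\xi \chi_\xi$ and $\check s = \sum a_\xi \check\chi_\xi$, one has $\|\check s - s\|_{L_1} \le C\|\bfa\|_{\ell_1}h^J$ and $\|\check s - s\|_{L_\infty} \le C\|\bfa\|_{\ell_\infty}h^J N \le C\|\bfa\|_{\ell_\infty}h^{J-2}$ (using $N \sim q^{-2} \sim h^{-2}$), so the Riesz--Thorin interpolation yields $\|\check s - s\|_{L_p} \le C h^{J-2}q^{2/p}\|\bfa\|_{\ell_p}$; combining with \eqref{p_stability} and choosing $h$ small enough that $Ch^{J-d}\le 1/2$ (recall $d=2$ here) produces the two-sided bound.

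The only genuinely delicate step is keeping track of the bookkeeping in the exponent: one must match the $\mu$ of Proposition~\ref{chi_tilde_properties} with the integer $\iota$ coming from Lemma~\ref{norm_inverse_lower_bnd}, correctly account for the $q^{2m-2}$ loss from $\vartheta^{-1}$, and absorb the $|\log h|^2$ prefactor into a slight inflation of $K$. Everything else is essentially a re-run of Proposition~\ref{chi_tilde_properties}, and for that reason the authors explicitly elect to omit the detailed write-up.
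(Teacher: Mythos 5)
Your proposal follows exactly the blueprint the paper lays out in the displays \eqref{dist_local_lag_to_lag}--\eqref{theta_bound} and then says is "similar to Proposition~\ref{chi_tilde_properties}, we omit": feed $\vartheta \ge Cq^{2m-2}$ and $q\sim h$ into \eqref{loc_lag_inf_theta}, absorb $|\log h|^2$ by a slight enlargement of $K$, verify the $G_\Upsilon^{-1}$ hypothesis through Lemmas~\ref{quad_cap_lem} and~\ref{norm_inverse_lower_bnd} with $\mu=\iota$, and then rerun the pointwise and interpolation arguments from Proposition~\ref{chi_tilde_properties}. This matches the paper's intended proof in all essentials. One small caution: your perturbation argument for the basis property is stated too optimistically -- bounding the $N\sim h^{-2}$ entries of $T-\mathrm{Id}$ by $Ch^J$ with merely $J>0$ does not by itself bound the operator norm; one needs either the decay structure from \eqref{check_chi_bnd} to control row sums, or simply the linear independence that falls out of the $L_p$ stability once $J>2$, which is the regime in which the remainder of the theorem operates.
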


\noindent {\bf Quasi-interpolation:} It follows that the operator
$$
Q_{\Xi} f =\sum_{\xi \in \Xi} f(\xi)\check \chi_\xi 
$$
provides $L_{\infty}$ convergence at the same asymptotic rate as
interpolation $I_{\Xi}$.  Indeed,
\begin{eqnarray*}
|I_{\Xi} f (x)- Q_{\Xi} f(x) |\le \sum_{\xi\in \Xi} |\check
\chi_{\xi}(x) - \chi_{\xi}(x)| |f(\xi)|
&\le& C q^{-2}
\|f\|_{\infty}{h}^{K\nu - 4m-2\mu}\\
&\le& C\|f\|_{\infty}h^{2m}
\end{eqnarray*}
provided that $K> \frac{6m +2\mu +2}{\nu}.$ It is shown in
\cite[Corollary 5.9]{HNW2} that restricted surface spline
interpolation exhibits $\|I_{\Xi} f - f\|_{\infty} \le C h^{\sigma}$
for $f\in C^{2m}(\sphere)$ when $\sigma=2m$ and for $f\in
B_{\infty,\infty}^{\sigma}(\sphere)$ for $\sigma<2m$.  So $Q_{\Xi}$
has the same rate of approximation (without needing to solve a large
system of equations).

{\bf Constructing basis functions in terms of $N$}.  Given a set of
scattered points, it may be desirable to use $N$ as the basic
parameter instead of $h$. Therefore we wish to express the number of
nearest neighbors needed as a function of the total cardinality $N$
instead of those within a $K h\log h$ neighborhood. Considering a cap
$B(\alpha,r)$, a simple volume argument gives
%\begin{equation}\label{card_est}
%\left(\frac{r}{h}\right)^{2}\left(1-\frac{r^2}{12}\right)\le \#Y 
%\le \frac{36}{11}\left(\frac{r}{q}\right)^{2}.
%\end{equation}
\begin{equation}\label{card_est}
\#(B(\alpha,r)\cap\Xi) \le \frac{36}{11}\left(\frac{r}{q}\right)^{2}.
\end{equation}
% Indeed, the lower bound can be obtained by covering $B(\alpha,r)$
% will caps of radius $h$ and using the volume formula $2\pi(1 -
% \cos(h))$.
Indeed, one arrives at this bound by first considering caps of radius
$q$ around each node in $B(\alpha,r)$. If $q$ is small enough, say
$q<r$, then at least $1/3$ the volume of each cap will be contained in
$B(\alpha,r)$. Using this and a Taylor expansion of the volume formula
$2\pi(1 - \cos(q))$ leads to \eqref{card_est}. Thus, the greatest
number of points in a cap of radius $Kh |\log h|$ is $\frac{36}{11}
\rho^2\bigl(K \log (1/h)\bigr)^2$. Also, it is not hard to show that
$2h^{-2}\le N,$ and hence it follows that the number of points is
bounded by $\frac{36}{11} \rho^2\bigl(\frac{K}{2}\log(N)\bigr)^2=
\frac{9}{11} (\rho K)^2\bigl(\log(N)\bigr)^2$, and it suffices to take
for $\Upsilon$ the nearest $\frac{9}{11} (\rho
K)^2\bigl(\log(N)\bigr)^2$ neighbors.

% To choose $M$ a priori we require knowledge of three constants:
% $\nu, \rho$ and $\mu$. From these, we set $M = \lceil \frac6{11}
% \rho^2 (4m+2\mu)/\nu\rceil$.
%\begin{enumerate}
%\item The constant $\nu$ depends only on $m$: this is the rate of
%  exponential decay of the Lagrange function coefficients from the
%  decay estimate (\ref{Coeff}): $|A_{\xi,\zeta}|\le C q^{d-2m}
%  e^{-\nu\frac{\d(x,\xi)}{h}}$. It can also be estimated empirically
%  -- this is done in Section 2.1.
%\item The constant $\rho$ is the mesh ratio $h/q$ which measures the
%  uniformity of the the set $\Xi$ in $\sphere$: it has been discussed
%  earlier in this section.
%\item The constant $\mu$ depends both on the distribution of points
%  as well as $m$: it is determined by the smallest eigenvalue
%  $\lambda_0$ of the Gram matrix for $\Pi_{m-1}$ with respect to the
%  $\ell_2(\Upsilon)$ inner product: $G_{\Upsilon} =
%  \bigl(\sum_{\zeta\in\Upsilon} \phi_j(\xi)
%  \phi_k(\zeta)\bigr)_{j,\ell}$.  The role of this constant is made
%  clear in Sections 4.2 and 4.3.  This constant satisfies $\mu \le 1+
%  \inf \frac{\log \lambda_0}{\log h}$ (for $m=2$, this constant can
%  be taken to be $4$).
%\end{enumerate}

{\bf The constants $\nu$ and $K$}. Before we turn to a discussion of
preconditioning, we wish to comment on the constants $\nu$ and $K$
above. These two constants come into play in a crucial way in many of
our estimates.

The decay constant $\nu$ first comes up in the proof of
Theorem~\ref{main}. (Although we do not mention it in the theorem, the
proof produces two different decay constants: $\nu_L$ and $\nu_C$, the
former for the Lagrange function and the latter for the coefficients.)
The estimate for $\nu$ is, of course, a lower bound on the decay
constant itself; it is independent of $\rho$, but weakly dependent on
$m$. Because of the nature of such estimates, it is very likely that
they are much lower than $\nu_{\text{actual}}$. How
$\nu_{\text{actual}}$ behaves as a function of $\rho$ is an open
question.

There is another open question concerning $K$. We know that it must be
bounded below by $\frac{4m-2+2\mu}{\nu}$. Thus a better estimate on
$\nu$ would produce a better lower bound on $K$. This in turn means
using smaller caps and fewer points in constructing the local Lagrange
interpolant -- i.e., giving it a smaller ``footprint.'' On the other
hand, the larger we make $K$ the better the approximation to
$\chi_\xi$ we get. Since $K$ can be made as large as we please, the
question then becomes this: What is an \emph{optimal} choice for $K$?
Indeed, what does the term \emph{optimal} mean here?

\section{Preconditioning with local Lagrange functions}\label{app}

In this section we illustrate how the local Lagrange functions can
also be used as an effective preconditioner for linear systems
associated with interpolation using the standard restricted spline
basis.  Our focus is on the restricted surface spline $k_2$ (i.e. the
restricted thin plate spline), for which the interpolant to
$f\bigr|_{\Xi}$ in the standard basis takes the form
\begin{align}
  I_{\Xi} f = \sum_{\xi\in \Xi} a_{\xi} k_2(\cdot,\xi) + \sum_{j=1}^4
  c_j \phi_j(\cdot), \label{eq:std_basis}
\end{align}
where $\phi_j$ are a basis for the spherical harmonics of degree $\leq
1$.  We note that this interpolant can also be written with respect to
the local Lagrange basis for $S_2(\Xi)$ as
\begin{align}
  I_{\Xi} f = \sum_{\xi\in \Xi} {\check
    a}_{\xi}\achi_{\xi}(\cdot); \label{eq:good_basis}
\end{align}
see Section \ref{LLB} for the details on constructing this basis.  

Using the properties of the local Lagrange basis, we can write the
linear system for determining the interpolation coefficients ${\check
  a}_{\xi}$ in \eqref{eq:good_basis} as:
\begin{equation}
\begin{bmatrix}
\coll & \Phi
\end{bmatrix}
\begin{bmatrix}
\A_{\Upsilon} \\
\C_{\Upsilon}
\end{bmatrix}
\begin{bmatrix}
\vect{\check a}
\end{bmatrix}
=
\begin{bmatrix}
\vect{f}
\end{bmatrix},
\label{eq:good_lin_sys}
\end{equation}
where $(\coll)_{i,j} = k_2(\xi_i,\xi_j)$, $i,j=1,\ldots,N$, and
$\Phi_{i,j} = \phi_j(\xi_i)$, $i=1,\ldots,N$, $j= 1,\ldots, 4$. The
matrix $\A_{\Upsilon}$ is a $N$-by-$N$ \emph{sparse} matrix where each
column contains $n = M(\log N)^2$ entries corresponding to the values
of the interpolation coefficients $A_{\xi,\zeta}$ for the local
Lagrange basis in \eqref{eq:approx_lagrange}.  The matrix
$\C_{\Upsilon}$ is a $4$-by-$N$ matrix with each column containing the
values of the interpolation coefficients $c_{\xi,j}$ in
\eqref{eq:approx_lagrange}.  With the linear system written in this
way, one can view the matrix $[\A_{\Upsilon}\; \C_{\Upsilon}]^{T}$ as
a \emph{right preconditioner} for the standard kernel interpolation
matrix.  Once $\vect{\check a}$ is determined from
\eqref{eq:good_lin_sys}, we can then find the interpolation
coefficients $a_{\xi}$ and $c_j$ in \eqref{eq:std_basis} from
\begin{align}
[\vect{a}\; \vect{c}]^T = [\A_{\Upsilon}\; \C_{\Upsilon}]^{T}\vect{a}.
\end{align}

If the local Lagrange basis decays sufficiently fast then the linear
system \eqref{eq:good_lin_sys} should be ``numerically nice'' in the
sense that the matrix $\coll \A_{\Upsilon} + \Phi \C_{\Upsilon}$
should have decaying elements from its diagonal and should be well
conditioned.  As discussed in the previous section, the decay is
controlled by the number of nearest neighbors $n$ used in constructing
each local Lagrange function and that $n=M(\log N)^2$.  In the
experiments below, we found that choosing $n=7\lceil (\log N)^2/(\log
10)^2) \rceil=7\lceil (\log_{10} N)^2 \rceil$ gave very good results
over several decades of $N$.

\begin{figure}[t]
\centering
\includegraphics[width=0.55\textwidth]{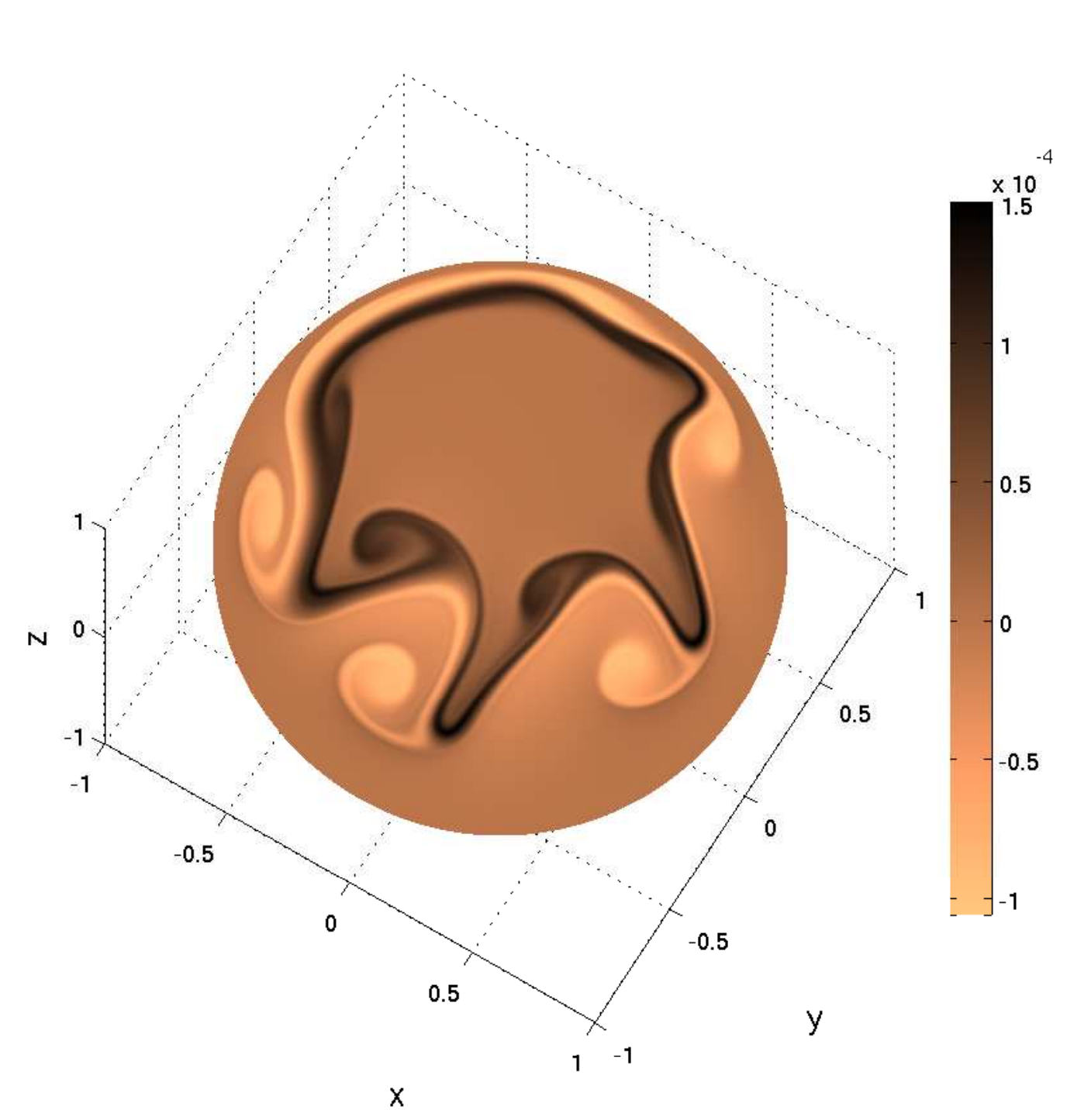}
\caption{Interpolated relative vorticity from a numerical simulation
of the shallow water wave equations on the $N=163842$ icosahedral node
sets.  The original values for the relative vorticity come
from~\cite{FlyerLehtoBlaiseWrightStCyr2012} and have been interpolated
to a regular $300\times 600$ latitude-longitude based grid using the
restricted kernel spline $k_2(x,\alpha) =
(1-x\cdot\alpha)\log(1-x\cdot \alpha)$.  The interpolation
coefficients were computed using GMRES on the preconditioned system
\eqref{eq:good_lin_sys}.\label{fig:shallow_water}}
\end{figure}

To solve the preconditioned linear system \eqref{eq:good_lin_sys} we
will use the generalized minimum residual method
(GMRES)~\cite{SaadSchultz}.  This is a Krylov subspace method which is
applicable to non-symmetric linear systems and only requires computing
matrix-vector products.  Each matrix-vector product involving the
preconditioner matrix $[A_{\Upsilon}\; \C_{\Upsilon}]^{T}$ requires
$\mathcal{O}\bigl(N(\log N)^2\bigr)$ operations, while each
matrix-vector product involving $[\coll\; \Phi]$ requires
$\mathcal{O}(N^2)$ operations.  However, Keiner \emph{et al.} have
shown that this latter product can be done in $\mathcal{O}(N\log N)$
operations using fast algorithms for spherical Fourier
transforms~\cite{Keiner:2006:FSR:1152729.1152732}.  As we are
primarily interested in the exploring the effectiveness of the local
Lagrange basis as a preconditioner, we have not used these fast
algorithms in the results below.  In a follow up study, we will
investigate these fast algorithms in combination with the
preconditioner in much more detail.

For the first numerical tests we use icosahedral node sets
$\Xi\subset\mathbb{S}^2$ of increasing cardinality.  These were chosen
because of their popularity in computational geosciences (see, for
example,~\cite{Giraldo:1997,StuhnePeltier:1999,
  Ringler:2000GeodesicGrids,Majewski:2002GME}) where interpolation
between node sets is often required.  The values of $f$ were chosen to
take on random values from a uniform distribution between $[-1,1]$.
Table \ref{tbl:iterations} displays the number of GMRES iterations to
compute an approximate solution to the resulting linear systems
\eqref{eq:good_lin_sys} for various $N$ and different tolerances.  As
we can see, the number of iterations is small and stays relatively
constant as $N$ increases.

\begin{table}[h]
\centering
\begin{tabular}{|c|c||c|c|}
\hline
& & \multicolumn{2}{c|}{Number GMRES iterations} \\
$N$ & $n$ &  $tol=10^{-6}$ & $tol=10^{-8}$ \\
\hline
\hline
2562 & 84 & 7 & 5 \\
10242 & 119 & 5 & 7 \\
23042 & 140 & 6 & 7 \\
40962 & 154 & 5 & 7 \\
92162 & 175 & 6 & 8 \\
163842 & 196 & 5 & 7\\
\hline
\end{tabular}
\caption{ Number of GMRES iterations required for computing an
approximate solution to \eqref{eq:good_lin_sys} using icosahedral node
sets of cardinality $N$.  Here $n$ corresponds to the number of nodes
used to construct the local basis and $tol$ refers to the tolerance on
the relative residual in the GMRES method.  The right hand side was
set to random values uniformly distributed between $[-1,1]$ and the
initial guess for GMRES was set equal to the function values.
\label{tbl:iterations}}
\end{table}

For the final numerical experiment, we use the above technique to
interpolate a field taken from a numerical simulation on the
icosahedral node sets to a regular latitude-longitude grid.  As
mentioned above, this is often necessary for purposes of comparing
solutions from different computational models, plotting solutions, or
coupling different models together.  The data we use comes
from~\cite{FlyerLehtoBlaiseWrightStCyr2012} and represents the
relative vorticity of a fluid described by the shallow water wave
equations on the surface of a rotating sphere.  The initial conditions
for the model lead to the development of a highly nonlinear wave with
rapid energy transfer from large to small scales, resulting in complex
vortical dynamics.  The numerical solution was computed on the
$N=163842$ node set and we interpolated it to a regular $300\times
600$ latitude-longitude based grid.  Figure \ref{fig:shallow_water}
displays the resulting interpolated relative vorticity from the
simulation at time $t=6$ days.  The figure clearly shows that the
complex flow structure has been maintained after the interpolation. As
in the numerical examples above, the approximate solution to
\eqref{eq:good_lin_sys} with this data was obtained in 7 iterations of
the GMRES method using a tolerance of $10^{-8}$.

\bibliographystyle{siam}
\bibliography{Sphere_Coeffs_submitted}
\end{document}